\DeclareMathAlphabet{\mathpzc}{OT1}{pzc}{m}{it}
\renewcommand{\subsection}[1]{\vspace{.18in}
\par\noindent\addtocounter{subsection}{1}
\setcounter{equation}{0}{\bf\thesubsection.\hspace{5pt}#1}}
\theoremstyle{definition}
\newtheorem{Def}[subsection]{Definition}
\theoremstyle{plain}
\newtheorem{Prop}[subsection]{Proposition}
\newtheorem{Thm}[subsection]{Theorem}
\newtheorem{Lem}[subsection]{Lemma}
\newtheorem{Coro}[subsection]{Corollary}
\numberwithin{equation}{subsection}
\def\bffkk{\boldsymbol{\frak k}}
\newcommand{\bfa}{{\mathbf{a}}}
\newcommand{\bfi}{{\mathbf{i}}}
\newcommand{\bfs}{{\mathbf{s}}}
\newcommand{\bfP}{{\mathbf{P}}}
\newcommand{\bfQ}{{\mathbf{Q}}}
\def\fS{{\frak S}}
\newcommand{\ms}{\mathscr}
\newcommand{\msD}{\mathscr D}
\def\sfz{{\mathsf z}}
\def\sfs{{\mathsf s}}
\newcommand{\mc}{\mathcal}
\def\sF{{\mathsf F}}
\def\sG{{\mathsf G}}
\def\sH{{\mathcal H}}
\def\sI{{\mathcal I}}
\def\sJ{{\mathcal J}}
\def\sP{{\mathcal P}}
\def\sQ{{\mathcal Q}}
\def\sS{{\mathcal S}}
\def\sT{{\mathcal T}}
\def\sX{{\mathcal X}}
\def\sY{{\mathcal Y}}
\newcommand{\mbnn}{\mathbb N^{n}}
\newcommand{\mbn}{\mathbb N}
\newcommand{\mbc}{\mathbb C}
\newcommand{\mbz}{\mathbb Z}
\newcommand{\ttk}{\mathtt{k}}
\newcommand{\tth}{\mathtt{h}}
\newcommand{\ttx}{\mathtt{x}}
\newcommand{\ttg}{\mathtt{g}}
\newcommand{\End}{\operatorname{End}}
\newcommand{\spann}{\operatorname{span}}
\newcommand{\la}{{\lambda}}
\newcommand{\La}{\Lambda}
\newcommand{\dt}{\delta}
\newcommand{\Dt}{\Delta}
\newcommand{\Og}{\Omega}
\newcommand{\og}{\omega}
\newcommand{\vi}{\varphi}
\newcommand{\ep}{\varepsilon}
\newcommand{\al}{\alpha}
\newcommand{\sg}{\sigma}
\def\th{\theta} 
\newcommand{\ol}{\overline}
\newcommand{\Lcp}{\bar L}
\newcommand{\Mcp}{\bar M}
\newcommand{\Icp}{\bar I}
\newcommand{\leb}{\left[}
\newcommand{\rib}{\right]}
\def\ggp#1#2{\left[\kern-3.2pt\left[{#1\atop #2}\right]\kern-3.2pt\right]}
\def\hmod{{\text-}{\mathsf{mod}}}
\def\leq{\leqslant}\def\geq{\geqslant}
\def\le{\leqslant}
\newcommand{\op}{\oplus}
\newcommand{\ot}{\otimes}
\newcommand{\han}{\subseteq}
\newcommand{\h}{\widehat}
\newcommand{\ti}{\widetilde}
\newcommand{\Lanr}{\Lambda(n,r)}
\newcommand{\lra}{\longrightarrow}
\newcommand{\ra}{\rightarrow}
\newcommand{\lm}{\longmapsto}
\newcommand{\pa}{\partial}
\newcommand{\zrC}{{\zeta}_{r}}
\newcommand{\vtg}{{\!\vartriangle\!}}
\newcommand{\DC}{{\frak D}_{\vtg,\mathbb C}}
\def\ttv{v}
\newcommand{\SrC}{\sS(n,r)_\mbc}
\newcommand{\UglC}{\text{\rm U}_{\mathbb C}({\frak{gl}}_n)}
\newcommand{\afHrC}{\sH_{\vtg}(r)_\mathbb C}
\newcommand{\afUslC}{\text{\rm U}_{\mathbb C}(\widehat{\frak{sl}}_n)}
\newcommand{\HrC}{\sH(r)_{\mathbb C}}
\newcommand{\afSrC}{{\mathcal S}_{\vtg}(n,r)_{\mathbb{C}}}
\newcommand{\afUglC}{\text{\rm U}_{\mathbb C}(\widehat{\frak{gl}}_n)}
\newcommand{\fSr}{\fS_r}
\newcommand{\OgC}{\Og_{\mathbb C}}
\newcommand{\OgnC}{\Og_{n,\mathbb C}}
\newcommand{\tri}{\triangle(n)}
\begin{document}
\title{Affine quantum Schur algebras and affine Hecke algebras}
\author{Qiang Fu}
\address{Department of Mathematics, Tongji University, Shanghai, 200092, China.}
\email{q.fu@hotmail.com}


\thanks{Supported by the National Natural Science Foundation
of China, the Program NCET, Fok Ying Tung Education Foundation
and the Fundamental Research Funds for the Central Universities}

\begin{abstract}
Let $\sF$ be the Schur functor from the category of finite dimensional $\afHrC$-modules to the category of finite dimensional $\afSrC$-modules, where $\afHrC$ is the extended affine Hecke algebra of type $A$ over $\mbc$ and $\afSrC$ is the affine quantum Schur algebras over $\mbc$.
The Drinfeld polynomials associated with $\sF(V)$ were determined in  \cite[7.6]{CP96} and \cite[4.4.2]{DDF} in the case of $n>r$, where $V$ is an irreducible  $\afHrC$-module. We will generalize the result in [loc. cit.] to the case of $n\leq r$. As an application, we will classify finite dimensional irreducible $\afSrC$-modules, which has been proved in \cite[4.6.8]{DDF} using a different method. Furthermore we will use it to generalize \cite[(6.5f)]{Gr80} to the affine case.
\end{abstract}
 \sloppy \maketitle
\section{Introduction}
It is well known that finite dimensional irreducible modules for quantum affine algebras were classified by Chari--Pressley in terms of Drinfeld polynomials (cf. \cite{CP91,CPbk,CP95,CP97}). Finite dimensional irreducible modules for $\afHrC$ were classified in \cite{Zelevinsky,Rogawski}, where $\afHrC$ is the extended affine Hecke algebra of type $A$ over the complex field $\mbc$ with a non-root of unity.
The category of finite dimensional $\afHrC$-modules and the category of finite dimensional $\afUslC$-modules which are of level $r$ are related by a functor $\mc F$, which was defined in \cite[4.2]{CP96}. Here $\afUslC$ is quantum affine $\frak{sl}_n$ over $\mbc$.
Chari--Pressley  proved in [loc. cit.] $\mc F$ is an equivalence of categories if $n>r$. Furthermore the Drinfeld polynomials associated with $\mc F(V)$ were determined in [loc. cit. 7.6] in the case of $n>r$, where $V$ is an irreducible $\afHrC$-module.

Let $\afUglC$ be quantum affine $\frak{gl}_n$ over $\mbc$.
In \cite{FM}, finite dimensional irreducible polynomial representations of $\afUglC$ were classified. It was proved in \cite[3.8.1]{DDF} that the natural algebra homomorphism $\zeta_r$ from $\afUglC$ to the affine quantum Schur algebra $\afSrC$ is surjective.
Every  $\afSrC$-module can be regarded as a $\afUglC$-module via $\zrC$. Let $\sF$ be the Schur functor from the category of finite dimensional $\afHrC$-modules to the category of finite dimensional $\afSrC$-modules. It was proved in \cite[4.1.3 and 4.2.1]{DDF} that $\sF$ is an equivalence of categories in the case of $n\geq r$ and $\sF(V)|_{\afUslC}$ is isomorphic to $\mc F(V)$ for any $\afHrC$-module $V$.
Furthermore, using \cite[7.6]{CP96}, the Drinfeld polynomials associated with $\sF(V)$ were determined in \cite[4.4.2]{DDF} in the case of $n>r$, where $V$ is an irreducible $\afHrC$-module. We will generalize \cite[7.6]{CP96} and \cite[4.4.2]{DDF} to the case of $n\leq r$ in \ref{main theorem}. Using this result, we will prove in \ref{classification 2} the classification theorem of finite dimensional irreducible $\afSrC$-modules, which was established in \cite[4.6.8]{DDF}. Finally, we will
relate the parametrization of irreducible $\sS_\vtg(N,r)_\mbc$-modules, via the functor $\sG$ defined in \eqref{functor sG}, to the parametrization of irreducible $\afSrC$-modules in \ref{prop of functor sG}. This result is the affine version of \cite[(6.5f)]{Gr80}.

\section{Quantum affine $\frak{gl}_n$}

Let $\ttv\in\mbc^*$ be a complex number which is not a root of unity,
where $\mbc^*=\mbc\backslash\{0\}$. Let $(c_{i,j})$ be the Cartan matrix of affine type $A_{n-1}$. We recall the Drinfeld's new realization of quantum affine $\frak{gl}_n$ as follows.

\begin{Def}\label{QLA}
 The {\it quantum loop algebra} $\afUglC$  (or {\it
quantum affine $\mathfrak {gl}_n$})  is the $\mbc$-algebra generated by $\ttx^\pm_{i,s}$
($1\leq i<n$, $s\in\mbz$), $\ttk_i^{\pm1}$ and $\ttg_{i,t}$ ($1\leq
i\leq n$, $t\in\mbz\backslash\{0\}$) with the following relations:
\begin{itemize}
 \item[(QLA1)] $\ttk_i\ttk_i^{-1}=1=\ttk_i^{-1}\ttk_i,\,\;[\ttk_i,\ttk_j]=0$,
 \item[(QLA2)]
 $\ttk_i\ttx^\pm_{j,s}=\ttv^{\pm(\dt_{i,j}-\dt_{i,j+1})}\ttx^\pm_{j,s}\ttk_i,\;
               [\ttk_i,\ttg_{j,s}]=0$,
 \item[(QLA3)] $[\ttg_{i,s},\ttx^\pm_{j,t}]
               =\begin{cases}0,\;\;&\text{if $i\not=j,\,j+1$};\\
                  \pm \ttv^{-js}\frac{[s]}{s}\ttx^\pm_{j,s+t},\;\;\;&\text{if $i=j$};\\
                  \mp \ttv^{-js}\frac{[s]}{s}\ttx_{j,s+t}^\pm,\;\;\;&\text{if $i=j+1$,}
                \end{cases}$
 \item[(QLA4)] $[\ttg_{i,s},\ttg_{j,t}]=0$,
 \item[(QLA5)]
 $[\ttx_{i,s}^+,\ttx_{j,t}^-]=\dt_{i,j}\frac{\phi^+_{i,s+t}
 -\phi^-_{i,s+t}}{\ttv-\ttv^{-1}}$,
 \item[(QLA6)] $\ttx^\pm_{i,s}\ttx^\pm_{j,t}=\ttx^\pm_{j,t}\ttx^\pm_{i,s}$, for $|i-j|>1$, and
 $[\ttx_{i,s+1}^\pm,\ttx^\pm_{j,t}]_{\ttv^{\pm c_{ij}}}
               =-[\ttx_{j,t+1}^\pm,\ttx^\pm_{i,s}]_{\ttv^{\pm c_{ij}}}$,
 \item[(QLA7)]
 $[\ttx_{i,s}^\pm,[\ttx^\pm_{j,t},\ttx^\pm_{i,p}]_\ttv]_\ttv
 =-[\ttx_{i,p}^\pm,[\ttx^\pm_{j,t},\ttx^\pm_{i,s}]_\ttv]_\ttv\;$ for
 $|i-j|=1$,
\end{itemize}
 where $[x,y]_a=xy-ayx$, $[s]=\frac{v^s-v^{-s}}{v-v^{-1}}$  and $\phi_{i,s}^\pm$ are defined via the
 generating functions in indeterminate $u$ by
$$\Phi_i^\pm(u):={\ti\ttk}_i^{\pm 1}
\exp\bigl(\pm(\ttv-\ttv^{-1})\sum_{m\geq 1}\tth_{i,\pm m}u^{\pm
m}\bigr)=\sum_{s\geq 0}\phi_{i,\pm s}^\pm u^{\pm s}$$ with
$\ti\ttk_i=\ttk_i/\ttk_{i+1}$ ($\ttk_{n+1}=\ttk_1$) and $\tth_{i,\pm
m}=\ttv^{\pm(i-1)m}\ttg_{i,\pm m}-\ttv^{\pm(i+1)m}\ttg_{i+1,\pm
m}\,(1\leq i<n).$
\end{Def}

The algebra $\afUglC$ has another presentation which we now describe.
Let $\DC(n)$ be the double Ringel--Hall algebra of the cyclic quiver $\tri$. By \cite[2.3.1]{DDF}, the algebra $\DC(n)$ has the following presentation.

\begin{Lem} \label{presentation dHallAlg} The double Ringel--Hall algebra $\DC(n)$ of
the cyclic quiver $\tri$ is the $\mbc$-algebra generated by
$E_i,\ F_i,\  K_i,\ K_i^{-1},\ \sfz^+_s,\ \sfz^-_s,$ for $1\leq i\leq n,\
s\in\mbz^+$, and relations:
\begin{itemize}
\item[(QGL1)] $K_{i}K_{j}=K_{j}K_{i},\ K_{i}K_{i}^{-1}=1$;

\item[(QGL2)] $K_{i}E_j=\ttv^{\dt_{i,j}-\dt_{i,j+1}}E_jK_{i}$,
$K_{i}F_j=\ttv^{-\dt_{i, j}+\dt_{ i,j+1}} F_jK_i$;

\item[(QGL3)] $E_iF_j-F_jE_i=\delta_{i,j}\frac
{\ti K_{i}-{\ti K_{i}}^{-1}}{\ttv-\ttv^{-1}}$, where $\ti K_i=
K_iK_{i+1}^{-1}$;

\item[(QGL4)]
$\displaystyle\sum_{a+b=1-c_{i,j}}(-1)^a\leb{1-c_{i,j}\atop a}\rib
E_i^{a}E_jE_i^{b}=0$ for $i\not=j$;

\item[(QGL5)]
$\displaystyle\sum_{a+b=1-c_{i,j}}(-1)^a\leb{1-c_{i,j}\atop a}\rib
F_i^{a}F_jF_i^{b}=0$ for $i\not=j$;

\item[(QGL6)] $\sfz^+_s\sfz^+_t=\sfz^+_t\sfz^+_s$,$\sfz^-_s\sfz^-_t=\sfz^-_t\sfz^-_s$,  $\sfz^+_s\sfz^-_t=\sfz^-_t\sfz^+_s$;
 \item[(QGL7)] $K_i\sfz^+_s=\sfz^+_s K_i$, $K_i\sfz^-_s=\sfz^-_s K_i$;

 \item[(QGL8)] $E_i\sfz^+_s=\sfz^+_s E_i$, $E_i\sfz^-_s=\sfz^-_s E_i,$ $F_i\sfz^-_s=\sfz^-_s F_i$, and  $\sfz^+_s F_i=F_i\sfz^+_s$,
\end{itemize}
where $1\leq i,j\leq n$, $s,t\in \mbz^+$ and $\big[{c\atop a}\big]=\prod_{s=1}^a\frac{v^{c-s+1}-v^{-c+s-1}}{v^s-v^{-s}}$ for $c\in\mbz$.
It is a Hopf algebra with
comultiplication $\Dt$, counit $\ep$, and antipode $\sg$ defined
by
\begin{eqnarray*}\label{Hopf}
&\Delta(E_i)=E_i\otimes\ti K_i+1\otimes
E_i,\quad\Delta(F_i)=F_i\otimes
1+\ti K_i^{-1}\otimes F_i,&\\
&\Delta(K^{\pm 1}_i)=K^{\pm 1}_i\otimes K^{\pm 1}_i,\quad
\Delta(\sfz_s^\pm)=\sfz_s^\pm\otimes1+1\otimes
\sfz_s^\pm;&\\
&\varepsilon(E_i)=\varepsilon(F_i)=0=\varepsilon(\sfz_s^\pm),
\quad \varepsilon(K_i)=1;&\\
&\sg(E_i)=-E_i\ti K_i^{-1},\quad \sg(F_i)=-\ti K_iF_i,\quad
\sg(K^{\pm 1}_i)=K^{\mp 1}_i,&\\
&\text{and}\;\;\sg(\sfz_s^\pm)=-\sfz_s^\pm,&
\end{eqnarray*}
 where $1\leq i\leq n$ and $s\in \mbz^+$.
\end{Lem}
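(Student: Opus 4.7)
The plan is to derive this presentation from the construction of $\DC(n)$ as a Drinfeld double of the (twisted) Ringel--Hall algebra $\Ha_\mbc$ of the cyclic quiver $\tri$. First I would recall Ringel's theorem: the composition subalgebra $\sC\subseteq\Ha_\mbc$ is generated by the classes $[S_i]$ of the simple representations, and Ringel's isomorphism identifies the Drinfeld double of $\sC$ (with its Cartan torus $\langle K_i^{\pm 1}\rangle$) with quantum affine $\mathfrak{sl}_n$. Taking $E_i=[S_i]$ in the positive copy and $F_i=[S_i]$ in the negative copy produces relations (QGL1)--(QGL5) at once.

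Next, I would invoke Schiffmann's description of $\Ha_\mbc$ as a polynomial extension of $\sC$ by a family of central elements $\sfz_s^+$ ($s\in\mbz^+$), one for each positive integer, indexed by homogeneous semisimple classes of period $s$; symmetrically the negative Hall algebra is polynomial over its composition subalgebra in generators $\sfz_s^-$. Relations (QGL6)--(QGL8) then reduce to three independent facts: the $\sfz_s^\pm$ are central in the respective halves of $\Ha_\mbc$, so they commute among themselves and with the $E_i$ (resp.\ $F_i$) on the same side; they have weight zero under the Cartan torus, so they commute with every $K_i$; and Green's skew-pairing between $\sfz_s^+$ on one side and either $F_j$ or $\sfz_t^-$ on the other vanishes, so the Drinfeld-double defect term disappears, yielding the cross commutations.

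The Hopf algebra structure is the standard Drinfeld-double Hopf structure: the formulas on $E_i$, $F_i$, $K_i^{\pm 1}$ are classical, while $\Dt(\sfz_s^\pm)=\sfz_s^\pm\otimes 1+1\otimes\sfz_s^\pm$ follows from Green's coproduct applied to the homogeneous semisimple classes, using that the relevant Hall numbers degenerate to the trivial decomposition. The counit and antipode formulas are then forced by the Hopf algebra axioms.

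The main obstacle, and the genuine content, is the \emph{completeness} of (QGL1)--(QGL8): one must show that no further relations hold. This reduces to constructing a triangular decomposition $\DC(n)\cong\DC^-(n)\otimes\DC^0(n)\otimes\DC^+(n)$ with PBW bases matching those of the abstract algebra defined by the listed relations. That step rests on Schiffmann's basis theorem for $\Ha_\mbc$ together with non-degeneracy of the Hopf pairing used in forming the double, after which the identification with the algebra presented by the listed relations is a standard Drinfeld-double argument, carried out in \cite[2.3.1]{DDF}.
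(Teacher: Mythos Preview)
The paper does not prove this lemma at all: it is stated as a quotation of \cite[2.3.1]{DDF}, with the sentence ``By \cite[2.3.1]{DDF}, the algebra $\DC(n)$ has the following presentation'' immediately preceding it. There is therefore no proof in the paper to compare your proposal against.

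Your outline is a reasonable sketch of the standard route to such a presentation---Ringel's identification of the composition algebra with the positive part of quantum affine $\mathfrak{sl}_n$, Schiffmann's theorem that the full Hall algebra of the cyclic quiver is a polynomial extension of the composition algebra by central elements, Green's pairing to control the cross relations in the Drinfeld double, and a triangular decomposition/PBW argument for completeness---and you correctly identify \cite[2.3.1]{DDF} as the place where this is carried out. One small caution: the claim that $\Delta(\sfz_s^\pm)$ is primitive does not follow from Green's comultiplication applied to a single semisimple class (those are not primitive); rather, the $\sfz_s^\pm$ are specific polynomial combinations of Hall-algebra elements \emph{chosen} so as to be primitive, and verifying this is part of the work in the cited reference. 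Otherwise your high-level plan matches the approach of \cite{DDF}.
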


Let $\afUslC$ be the subalgebra of $\DC(n)$ generated by $E_i,\ F_i,\  \ti K_i,\ti K_i^{-1}$ for $i\in [1,n]$. Beck \cite{Be} proved that $\afUslC$ is isomorphic to
the subalgebra of $\afUglC$ generated by all $\ttx^\pm_{i,s}$, $\ti\ttk_i^{\pm1}$ and $\tth_{i,t}$.
The following result extends Beck's isomorphism..

\begin{Lem}[{\cite[4.4.1]{DDF}}] \label{DDFIsoThm}
There is a Hopf  algebra isomorphism
 $$f:\DC(n)\lra
\afUglC$$ such that
$$\aligned
&K_i^{\pm1}\lm\ttk_i^{\pm1},\;\quad
E_j\lm \ttx^+_{j,0},\;\quad
F_j\lm \ttx^-_{j,0}
\;(1\leq i\leq n,\,1\leq j<n),\;\\
&E_n\lm \ttv\sX \ti\ttk_n,
\quad F_n\lm\ttv^{-1}\ti\ttk_n^{-1}\sY,
\quad
\sfz^\pm_s\lm
\mp s\ttv^{\pm s}\th_{\pm
s}\;(s\geq 1),\\
\endaligned$$
where
$
\th_{\pm s} =\mp\frac1{[s]_q}(\ttg_{1,\pm s}+\cdots+\ttg_{n,\pm s})$,
$\sX =[\ttx_{n-1,0}^-,[\ttx_{n-2,0}^-,\cdots,
[\ttx_{2,0}^-,\ttx_{1,1}^-]_{\ttv^{-1}}\cdots
]_{\ttv^{-1}}]_{\ttv^{-1}}$ and
$\sY =[\cdots[[\ttx_{1,-1}^+,\ttx_{2,0}^+]_\ttv,\ttx_{3,0}^+]_\ttv,
 \cdots,\ttx_{n-1,0}^+]_\ttv.$
\end{Lem}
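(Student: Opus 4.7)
The approach is to leverage Beck's isomorphism as the starting point and extend it to the full algebras. Beck's theorem already identifies $\afUslC\han\DC(n)$ with the subalgebra of $\afUglC$ generated by $\ttx^\pm_{i,s}$, $\ti\ttk_i^{\pm 1}$, $\tth_{i,t}$, sending each $E_i, F_i$ for $1\le i<n$ to the degree-$0$ loop generators $\ttx^\pm_{i,0}$ and $\ti K_i$ to $\ti\ttk_i$, while the affine Chevalley generators $E_n,F_n$ go to the iterated-bracket expressions $\ttv\sX\ti\ttk_n$ and $\ttv^{-1}\ti\ttk_n^{-1}\sY$. The only new data to encode on top of Beck's map are the individual weights $K_i$ (as opposed to their quotients $\ti K_i = K_iK_{i+1}^{-1}$) and the central loop generators $\sfz_s^\pm$.

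For the weights, setting $f(K_i^{\pm 1})=\ttk_i^{\pm 1}$ makes (QGL1)--(QGL2) match (QLA1)--(QLA2) directly and refines Beck's identification on the Cartan part. For the central generators I would set $f(\sfz_s^\pm)=\mp s\ttv^{\pm s}\th_{\pm s}$ and verify the image is central in $\afUglC$. Centrality follows from (QLA3): the commutator $[\ttg_{i,s},\ttx^\pm_{j,t}]$ vanishes unless $i\in\{j,j+1\}$, and in those two cases the contributions have equal magnitude but opposite sign, so the telescoping sum $\sum_i\ttg_{i,\pm s}$ commutes with every $\ttx^\pm_{j,t}$; combining this with (QLA2) and (QLA4) verifies (QGL6)--(QGL8) under $f$. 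Compatibility with the Hopf structure then reduces, after Beck, to checking that $\th_{\pm s}$ is primitive in $\afUglC$, which is standard for the imaginary root generators in the Drinfeld presentation.

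To upgrade the homomorphism to an isomorphism I would exhibit the inverse on generators of $\afUglC$: the $s=0$ loop generators and $\ti\ttk_j^{\pm 1}$ lie in $f(\afUslC)$ by Beck; the higher $\ttx^\pm_{j,s}$ are produced by Drinfeld's iterated-bracket construction from these together with the $\tth_{j,t}$, which also sit in $f(\afUslC)$; and each $\ttg_{i,\pm m}$ can be recovered from the $\tth_{i,\pm m}$ ($1\le i<n$) together with the central element $\th_{\pm m}$ via the triangular linear system implicit in $\tth_{i,\pm m}=\ttv^{\pm(i-1)m}\ttg_{i,\pm m}-\ttv^{\pm(i+1)m}\ttg_{i+1,\pm m}$ and $\th_{\pm m}=\mp\frac{1}{[m]_q}\sum_i\ttg_{i,\pm m}$, whose coefficient matrix is nondegenerate for each $m$ since $\ttv$ is not a root of unity. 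The remaining generator $\ttk_n$ is hit by $f(K_n)$, giving both surjectivity and injectivity.

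The main obstacle, where this plan requires real work rather than formal bookkeeping, is checking that the iterated-bracket elements $\sX,\sY$ with the stated $\ttv^{\pm 1}$-normalizations actually satisfy the Drinfeld-style Serre relations at the affine vertex $i=n$ and interact with the other Chevalley generators to reproduce (QGL3)--(QGL5) at $i=n$. This is exactly where Beck's braid-group and commutator lemmas enter and where the specific choices of brackets $[\,\cdot\,,\,\cdot\,]_{\ttv^{\pm 1}}$ and the prefactors $\ttv^{\pm 1}$ get pinned down. Once that is in hand, centrality, primitivity and the triangular inversion on the imaginary part are all essentially formal.
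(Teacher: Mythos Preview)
The paper does not prove this statement; it is quoted verbatim as \cite[4.4.1]{DDF} and used as a black box, so there is no ``paper's own proof'' to compare against. Your plan is the standard one and is essentially the argument carried out in the cited reference: Beck's isomorphism handles the $\afUslC$-part (including the affine vertex $i=n$ via the iterated brackets $\sX,\sY$), the refinement $K_i\mapsto\ttk_i$ is immediate from (QLA1)--(QLA2), and the extension on the central loop generators $\sfz_s^\pm\mapsto\mp s\ttv^{\pm s}\th_{\pm s}$ is exactly what is checked in \cite{DDF}, with centrality coming from the telescoping you describe and bijectivity from the triangular system relating $\{\ttg_{i,\pm m}\}$ to $\{\tth_{j,\pm m}\}\cup\{\th_{\pm m}\}$.

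One point worth tightening: the ``main obstacle'' you flag, namely verifying (QGL3)--(QGL5) at $i=n$ for the bracket expressions $\ttv\sX\ti\ttk_n$ and $\ttv^{-1}\ti\ttk_n^{-1}\sY$, is precisely the content of Beck's theorem, so once you invoke Beck you are not obligated to redo that computation. The genuinely new check beyond Beck is that $\th_{\pm s}$ is central and primitive and that the $n\times n$ change-of-basis matrix from $(\ttg_{1,\pm m},\ldots,\ttg_{n,\pm m})$ to $(\tth_{1,\pm m},\ldots,\tth_{n-1,\pm m},\th_{\pm m})$ is invertible; both are short, and your sketch of each is correct.
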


We now review the classification theorem of finite dimensional irreducible polynomial $\afUglC$-modules. We first need to introduce the elements $\ms
Q_{i,s}\in\afUglC$, which will be used to define pseudo-highest weight modules.
For $1\leq i\leq n$ and $s\in\mbz$, define the elements $\ms
Q_{i,s}\in\afUglC$ through the generating functions
\begin{equation*}
\begin{split}
&\quad\qquad\ms Q_i^\pm(u):=\exp\bigg(-\sum_{t\geq
1}\frac{1}{[t]}g_{i,\pm t} (\ttv u)^{\pm t}\bigg)=\sum_{s\geq
0}\ms Q_{i,\pm s} u^{\pm s}\in\afUglC[[u,u^{-1}]].
\end{split}
\end{equation*}

For a representation $V$ of $\afUglC$,
a nonzero vector $w\in V$ is called a {\it pseudo-highest weight
vector}\index{pseudo-highest weight vector} if there exist
some $Q_{i,s}\in\mbc$ such that
\begin{equation}\label{HWvector}
\ttx_{j,s}^+w=0,\quad\ms Q_{i,s}w=Q_{i,s}w,\quad
\ttk_iw=\ttv^{\la_i}w
\end{equation}
for all $1\leq i\leq n$ and $1\leq j\leq n-1$ and $s\in\mbz$. The
module $V$ is called a {\it pseudo-highest weight module} if $V=\afUglC w$
for some pseudo-highest weight vector $w$.
We also write the short form $\ms Q_i^\pm(u)w=Q_i^\pm(u)w$ for the
relations $\ms Q_{i,s}w=Q_{i,s}w\,(s\in\mbz)$, where
\begin{equation*}\label{power series Q}
Q_i^\pm(u)=\sum_{s\geq 0}Q_{i,\pm s}u^{\pm s}.
\end{equation*}

Let $V$ be a finite dimensional polynomial representation of $\afUglC$ of type
1. Then $V=\oplus_{\la\in\mbn^{n}}V_\la$, where
$$V_\la=\{x\in V\mid \ttk_jx=\ttv^{\la_j}x, 1\leq j\leq n \},$$
and, since all $\ms
Q_{i,s}$ commute with the $ \ttk_j$,  each $V_\la$ is a direct sum
of generalized eigenspaces of the form
\begin{equation}\label{geigenspace}
V_{\la,\gamma}=\{x\in V_\la\mid (\ms
Q_{i,s}-\gamma_{i,s})^px=0\text{ for some $p$}\, (1\leq i\leq
n,s\in\mbz)\},
\end{equation}
 where $\gamma=(\gamma_{i,s})$ with $\gamma_{i,s}\in\mbc$. Let $\Gamma_i^\pm(u)=\sum_{s\geq 0}\gamma_{i,\pm s} u^{\pm
s}$.

A finite dimensional $\afUglC$-module $V$ is called a {\it
polynomial representation}\index{polynomial representation} if the restriction of $V$ to $\UglC$ is a polynomial representation
of type 1 and, for every weight $\la=(\la_1,\ldots,\la_n)\in\mbn^n$
of $V$, the formal power series $\Gamma_i^\pm(u)$ associated to the
eigenvalues $(\gamma_{i,s})_{s\in\mbz}$ defining the generalized
eigenspaces $V_{\la,\gamma}$ as given in \eqref{geigenspace}, are
polynomials in $u^\pm$ of degree $\la_i$ so that the zeroes of the
functions $\Gamma_i^+(u)$ and $\Gamma_i^-(u)$ are the same.

Following \cite{FM}, an $n$-tuple of polynomials
$\bfQ=(Q_1(u),\ldots,Q_n(u))$ with constant terms $1$ is called {\it
dominant} if, for each $1\leq i\leq n-1$, the ratio
$Q_i(\ttv^{i-1}u)/Q_{i+1}(\ttv^{i+1}u)$ is a polynomial. Let
$\sQ(n)$ be the set of dominant $n$-tuples of polynomials.

For $g(u)=\prod_{1\leq i\leq m}(1-a_iu)\in\mbc[u]$
with constant term $1$ and $a_i\in\mbc^*$, define
\begin{equation}\label{f^pm(u)}
g^\pm(u)=\prod_{1\leq i\leq m}(1-a_i^{\pm1}u^{\pm1}).
\end{equation}
For $\bfQ=(Q_1(u),\ldots,Q_{n}(u))\in\sQ(n)$, define
$Q_{i,s}\in\mbc$, for $1\leq i\leq n$ and $s\in\mbz$, by the
following formula
$$Q_i^\pm(u)=\sum_{s\geq 0}Q_{i,\pm s}u^{\pm s},$$
where $Q_i^\pm(u)$ is defined using \eqref{f^pm(u)}. Let $I(\bfQ)$
be the left ideal of $\afUglC$ generated by $\ttx_{j,s}^+ ,\quad\ms
Q_{i,s}-Q_{i,s},$ and $\ttk_i-\ttv^{\la_i}$, for $1\leq j\leq n-1$,
$1\leq i\leq n$ and $s\in\mbz$, where $\la_i=\mathrm{deg}Q_i(u)$,
and define
$$M(\bfQ)=\afUglC/I(\bfQ).$$
Then $M(\bfQ)$ has a unique irreducible quotient, denoted by $L(\bfQ)$.
The polynomials $Q_i(u)$ are called {\it Drinfeld
polynomials} associated with $L(\bfQ)$.

\begin{Thm}[\cite{FM}]\label{classification of simple afUglC-modules}
The $\afUglC$-modules $L(\bfQ)$ with $\bfQ\in\sQ(n)$ are all
nonisomorphic finite dimensional irreducible polynomial representations
of $\afUglC$.
\end{Thm}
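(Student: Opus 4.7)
The plan is to follow the pseudo-highest-weight argument standard in Chari--Pressley theory, with the additional input of the polynomial/dominance conditions of Frenkel--Mukhin, using restriction to $\afUslC$ via Lemma \ref{DDFIsoThm} to leverage the Chari--Pressley classification of finite-dimensional simples for $\afUslC$. The statement splits into four claims: (i) every finite-dimensional irreducible polynomial $V$ contains a pseudo-highest weight vector $w$; (ii) the eigenvalues on $w$ arise from a dominant tuple $\bfQ\in\sQ(n)$; (iii) distinct $\bfQ$'s label non-isomorphic $L(\bfQ)$'s; (iv) $L(\bfQ)$ is finite-dimensional for every $\bfQ\in\sQ(n)$.

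For (i), I would decompose $V=\bigoplus_\la V_\la$ and use (QLA2) and (QLA4) to see that each $V_\la$ is stable under the commuting family $\{\ttk_i,\ms Q_{i,s}\}$. Picking $\la$ maximal in a suitable weight ordering and choosing a common eigenvector $w\in V_\la$ for this family (which exists on the finite-dimensional space $V_\la$), maximality gives $\ttx^+_{j,s}w\in V_{\la+e_j-e_{j+1}}=0$ for $1\leq j\leq n-1$ and $s\in\mbz$, so $w$ is pseudo-highest weight. Irreducibility of $V$ then forces $V=\afUglC w$, and once $\bfQ$ is produced in (ii) the defining ideal of $M(\bfQ)$ identifies $V$ with a quotient of $M(\bfQ)$, hence with $L(\bfQ)$.

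For (ii), the polynomial hypothesis on $V$ asserts that the generating series $\Gamma_i^\pm(u)$ associated to the joint generalized eigenvalues on $V_\la$ are polynomials in $u^{\pm 1}$ of degree $\la_i$ with matching zeros. Applied to $w$, this produces a unique polynomial $Q_i(u)$ with constant term $1$ and $\dg Q_i=\la_i$ whose $Q_i^\pm(u)$ (as in \eqref{f^pm(u)}) recover the scalars $Q_{i,s}$. To see $\bfQ=(Q_1(u),\ldots,Q_n(u))\in\sQ(n)$, restrict $V$ through Beck's embedding $\afUslC\hookrightarrow\afUglC$ (cf.~Lemma \ref{DDFIsoThm}): then $w$ is pseudo-highest weight for $\afUslC$, and the Chari--Pressley criterion asserts that finite-dimensionality of this $\afUslC$-module is equivalent to each ratio $Q_i(\ttv^{i-1}u)/Q_{i+1}(\ttv^{i+1}u)$ being a polynomial, i.e.~to the dominance condition on $\bfQ$. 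Claim (iii) is then immediate, since $\bfQ$ is recovered from the $\ms Q_{i,s}$-eigenvalues on the pseudo-highest weight line.

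The main obstacle is (iv): showing $L(\bfQ)$ has finite dimension for every $\bfQ\in\sQ(n)$. The standard route would construct, for each $a\in\mbc^*$ and each $1\leq i\leq n-1$, a fundamental evaluation module on (a $\ttv$-analogue of) $\bigwedge^i\mbc^n$, obtained by pulling back a finite-dimensional $\UglC$-module along Jimbo's evaluation homomorphism $\afUglC\to\UglC$ (which extends from the $\afUslC$-case using the central generators $\sfz^\pm_s$ from Lemma \ref{presentation dHallAlg}). A suitable tensor product of such modules, indexed by the roots of the $Q_i(u)$ and assembled via the Hopf structure, then carries a pseudo-highest weight vector whose Drinfeld tuple is $\bfQ$, by a direct coproduct computation for the $\ms Q_i^\pm(u)$ based on $\Dt(\sfz_s^\pm)=\sfz_s^\pm\otimes 1+1\otimes\sfz_s^\pm$. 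Consequently $L(\bfQ)$ appears as an irreducible subquotient and is automatically finite-dimensional and polynomial. The delicate technical work lies in pinning down the fundamental evaluation modules and establishing the coproduct formula for the generating series $\ms Q_i^\pm(u)$; this is what Frenkel--Mukhin carry out in \cite{FM}.
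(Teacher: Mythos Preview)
The paper does not prove this theorem; it is quoted from Frenkel--Mukhin \cite{FM} as a black box and no argument is supplied. So there is nothing in the paper to compare your sketch against.

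As an outline of the argument in \cite{FM}, your plan for (i)--(iii) is the standard pseudo-highest-weight reasoning and is sound. In (iv), however, the justification you offer for the multiplicativity of the $\ms Q_i^\pm(u)$-eigenvalues on a tensor product is not right: the primitivity $\Delta(\sfz_s^\pm)=\sfz_s^\pm\otimes 1+1\otimes\sfz_s^\pm$ from Lemma~\ref{presentation dHallAlg} concerns only the \emph{central} combinations $\theta_{\pm s}=\mp\frac{1}{[s]}\sum_i\ttg_{i,\pm s}$ (via the isomorphism $f$ of Lemma~\ref{DDFIsoThm}), not the individual $\ttg_{i,\pm s}$ from which $\ms Q_i^\pm(u)$ is built. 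The comultiplication on the Drinfeld generators $\ttg_{i,\pm s}$ is not primitive, and the fact that the eigenvalues of $\ms Q_i^\pm(u)$ multiply on tensor products of pseudo-highest weight vectors is a genuinely nontrivial computation (this is the content of \cite[4.1]{FM} and \cite[6.3]{CP96}, invoked later in this very paper in the proof of Theorem~\ref{main theorem}). You correctly flag this step as ``delicate technical work'' deferred to \cite{FM}, but the mechanism you suggest for it would not work. With that caveat, your overall strategy matches the Frenkel--Mukhin approach.
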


If $\bfQ,\bfQ'\in\sQ(n)$ is such that $Q_j(\ttv^{j-1}u)/Q_{j+1}(\ttv^{j+1}u)=Q_j'(\ttv^{j-1}u)/Q_{j+1}'(\ttv^{j+1}u)$ and $\deg Q_j(u)-\deg Q_{j+1}(u)=\deg Q_j'(u)-\deg Q_{j+1}'(u)$ for $1\leq j\leq n-1$, then by \cite[4.7.1 and 4.7.2]{DDF}, we have $L(\bfQ)|_{\afUslC}\cong L(\bfQ')|_{\afUslC}$.
Thus we may denote $L(\bfQ)|_{\afUslC}$ by $\bar L(\bfP)$,
where
$\bfP=(P_1(u),\ldots,P_{n-1}(u))$ with
$P_j(u)=Q_j(\ttv^{j-1}u)/Q_{j+1}(\ttv^{j+1}u)$.

 Let $\sP(n)$ be the set
of $(n-1)$-tuples of polynomials with constant terms $1$.
The following result is due to Chari--Pressley (cf.
\cite{CP91,CPbk,CP95}).

\begin{Thm}
The modules $\Lcp(\bfP)$ with $\bfP\in\sP(n)$ are all nonisomorphic
finite dimensional irreducible $\afUslC$-modules of  type $1$.
\end{Thm}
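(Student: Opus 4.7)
The plan is to derive this theorem from the $\afUglC$-classification, Theorem~\ref{classification of simple afUglC-modules}, via the relation $L(\bfQ)|_{\afUslC}\cong\Lcp(\bfP)$ recalled just before the statement, where $\bfP=(P_1(u),\ldots,P_{n-1}(u))$ with $P_j(u)=Q_j(\ttv^{j-1}u)/Q_{j+1}(\ttv^{j+1}u)$. This reduces the proof to two tasks: (i) realizing every $\bfP\in\sP(n)$ by some $\bfQ\in\sQ(n)$ with the restriction remaining irreducible, and (ii) showing that every finite dimensional irreducible $\afUslC$-module of type $1$ arises in this way.

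For (i), given $\bfP=(P_1,\ldots,P_{n-1})\in\sP(n)$ I would construct $\bfQ=(Q_1,\ldots,Q_n)$ recursively by $Q_n(u)=1$ and $Q_j(u)=P_j(\ttv^{-(j-1)}u)\,Q_{j+1}(\ttv^{2}u)$ for $j=n-1,\ldots,1$. Each $Q_j(u)$ has constant term $1$ and $Q_j(\ttv^{j-1}u)/Q_{j+1}(\ttv^{j+1}u)=P_j(u)$ is polynomial, so $\bfQ\in\sQ(n)$. Theorem~\ref{classification of simple afUglC-modules} then gives a finite dimensional irreducible polynomial $\afUglC$-module $L(\bfQ)$, and hence a finite dimensional module $\Lcp(\bfP)=L(\bfQ)|_{\afUslC}$. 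For irreducibility of the restriction I would observe that the elements $\sfz_s^\pm$ (central by (QGL6)--(QGL8)) and $K_1K_2\cdots K_n$ (central by summing (QGL2) over $i$) act as scalars on the irreducible $\afUglC$-module $L(\bfQ)$; any nonzero $\afUslC$-submodule is therefore automatically stable under them, and since on $L(\bfQ)$ every $K_i$ is recoverable from the $\ti\ttk_j$ together with $K_1\cdots K_n$, it is stable under all of $\afUglC$, hence equal to $L(\bfQ)$. Pairwise non-isomorphism is read off a pseudo-highest weight vector: by Beck's isomorphism (Lemma~\ref{DDFIsoThm}) the elements $\ti\ttk_i^{\pm 1}$ and $\tth_{i,\pm m}$ all lie in $\afUslC$, so the generating series $\Phi_i^\pm(u)$, and thus the ratios $Q_i(\ttv^{i-1}u)/Q_{i+1}(\ttv^{i+1}u)=P_i(u)$, are intrinsic invariants of $\Lcp(\bfP)$ as an $\afUslC$-module.

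The main obstacle is (ii). My plan is: given a finite dimensional irreducible type $1$ $\afUslC$-module $V$, extract a pseudo-highest weight vector $w\in V$ using the triangular decomposition of $\afUslC$ and finite-dimensionality, then extend the action to $\afUglC$ by declaring the central elements $\sfz_s^\pm$ and $K_1\cdots K_n$ to act by scalars compatible with the eigenvalues of $\Phi_i^\pm(u)$ on $w$ (twisting by a central character if needed to absorb the $n$-th root ambiguity in recovering the individual $K_i$). Theorem~\ref{classification of simple afUglC-modules} would then identify the extended module with some $L(\bfQ)$, yielding $V\cong\Lcp(\bfP)$. The substantive step is verifying that the extension is polynomial --- equivalently, that the eigenvalues of $\ms Q_i^\pm(u)$ on $w$ assemble into genuine polynomials of the expected degree. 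This is exactly the finite-dimensionality criterion of Chari--Pressley for pseudo-highest weight $\afUslC$-modules, and is the essential non-trivial input behind the classification.
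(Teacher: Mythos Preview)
The paper does not supply a proof of this theorem; it is recorded as a known result of Chari--Pressley with references \cite{CP91,CPbk,CP95}. There is therefore no proof in the paper to compare your proposal against.

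Your strategy of deducing the $\afUslC$-classification from Theorem~\ref{classification of simple afUglC-modules} (the Frenkel--Mukhin classification for $\afUglC$) is, however, logically problematic. The Frenkel--Mukhin result postdates and, in its standard treatment, builds on the Chari--Pressley classification for $\afUslC$; taking it as input to re-derive Chari--Pressley is circular. More concretely, in step~(ii) you explicitly invoke ``the finite-dimensionality criterion of Chari--Pressley for pseudo-highest weight $\afUslC$-modules'' as ``the essential non-trivial input''. But that criterion \emph{is} the heart of the theorem you are trying to prove: showing that $\Lcp(\bfP)$ is finite dimensional precisely when the $P_j$ are polynomials is the substance of \cite{CP91,CPbk,CP95}. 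Your argument, as written, reduces the theorem to itself.

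Your part~(i) is a correct and useful observation: granting Theorem~\ref{classification of simple afUglC-modules} and the restriction isomorphism $L(\bfQ)|_{\afUslC}\cong\Lcp(\bfP)$ from \cite[4.7.1, 4.7.2]{DDF}, the modules $\Lcp(\bfP)$ are finite dimensional, irreducible, and pairwise non-isomorphic. One small wrinkle: the individual $K_i$ are not literally recoverable as elements from the $\ti K_j$ together with $K_1\cdots K_n$ (only $K_1^n$ is); what makes the irreducibility argument work is that on $L(\bfQ)$ all weights $\la$ share the same value of $\sum_i\la_i$, so the $\ti K$-weight decomposition coincides with the $K$-weight decomposition, and any $\afUslC$-submodule is automatically $K_i$-stable. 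But establishing the converse direction~(ii) without already quoting Chari--Pressley is the actual content of the theorem, and your outline does not accomplish this.
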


\section{Affine quantum Schur algebras}
In this section we collect some facts about
extended affine Hecke algebras and affine quantum Schur algebras, which will be used in \S 4.
The extended affine Hecke algebra $\afHrC$ is defined to be the algebra generated by
$$T_i,\quad X_j^{\pm 1}(\text{$1\leq i\leq r-1$, $1\leq j\leq r$}),$$
 and relations
$$\aligned
 & (T_i+1)(T_i-\ttv^2)=0,\\
 & T_iT_{i+1}T_i=T_{i+1}T_iT_{i+1},\;\;T_iT_j=T_jT_i\;(|i-j|>1),\\
 & X_iX_i^{-1}=1=X_i^{-1}X_i,\;\; X_iX_j=X_jX_i,\\
 & T_iX_iT_i=\ttv^2 X_{i+1},\;\;  X_jT_i=T_iX_j\;(j\not=i,i+1).
\endaligned$$

Let $\fSr$ be the symmetric group with generators $s_i:=(i,i+1)$ for $1\leq i\leq r-1$.
Let $I(n,r)=\{(i_1,\ldots,i_r)\in\mbz^r\mid 1\leq i_k\leq
n,\,\forall k\}.$
The symmetric group $\fSr$ acts
on the set $I(n,r)$ by place permutation:
\begin{equation*}\label{place permutation}
\bfi w=(i_{w(k)})_{k\in\mbz},\quad\text{
for $\bfi\in I(n,r)$ and $w\in\fSr$.}
\end{equation*}

Let $\OgC$ be a vector space over $\mathbb C$ with basis $\{\og_i\mid i\in\mathbb Z\}$.  For
 $\bfi=(i_1,\ldots,i_r)\in\mbz^r$, write
$$\og_\bfi=\og_{i_1}\ot\og_{i_2}\ot\cdots\ot \og_{i_r}=\og_{i_1}\og_{i_2}\cdots \og_{i_r}\in\OgC^{\ot r}.$$
The tensor space $\OgC^{\ot r}$
admits a right $\afHrC$-module structure defined by
\begin{equation*}\label{afH action}
\begin{cases}
\og_{\bf i}\cdot X_t^{-1}
=\og_{i_1}\cdots\og_{i_{t-1}}\og_{i_t+n}\og_{i_{t+1}}\cdots\og_{i_r},\qquad \text{ for all }\bfi\in \mbz^r;\\
{\og_{\bf i}\cdot T_k=\left\{\begin{array}{ll} \ttv^2\og_{\bf
i},\;\;&\text{if $i_k=i_{k+1}$;}\\
\ttv\og_{\bfi s_k},\;\;&\text{if $i_k<i_{k+1}$;}\qquad\text{ for all }\bfi\in I(n,r),\\
\ttv\og_{\bfi s_k}+(\ttv^2-1)\og_{\bf i},\;\;&\text{if
$i_{k+1}<i_k$,}
\end{array}\right.}
\end{cases}
\end{equation*}
where $1\leq k\leq r-1$ and $1\le t\le r$.

The algebra $$\afSrC:=\End_{\afHrC}(\sT_\vtg(n,r))$$
is called an affine $q$-Schur algebra, where $\sT_\vtg(n,r)=\OgC^{\ot r}$. Let $\OgnC$ be the subspace of $\OgC$ spanned by $\og_i$ with $1\leq i\leq n$ and $\HrC$ be the subalgebra of $\afHrC$ generated by $T_k$ for $1\leq k\leq r-1$. Then the algebra $\SrC: =\End_{\HrC}(\sT(n,r))$ is called a $q$-Schur algebra, where $\sT(n,r)=\OgnC^{\ot r}$.

The algebras $\afUglC$ and  $\afSrC$ are related by an algebra homomorphism $\zeta_r$, which we now describe.
For $i\in\mbz$, let $\bar i$ denote the integer modulo $n$.
The complex vector space $\OgC$ is a natural $\DC(n)$-module with the action
\begin{equation} \label{QGKMAlg-action}
\aligned
E_i\cdot \og_s&=\dt_{\ol{i+1},\bar s}\og_{s-1},\quad F_i\cdot \og_s=\dt_{\bar i,\bar
s}\og_{s+1},\quad
K_i^{\pm 1}\cdot \og_s=\ttv^{\pm\dt_{\bar i,\bar s}}\og_s,\\
&\sfz_t^+\cdot\og_s=\og_{s-tn},\quad\text{and }\;\;
\sfz_t^-\cdot\og_s=\og_{s+tn}.
\endaligned
\end{equation}
The Hopf algebra structure induces a $\DC(n)$-module $\OgC^{\ot r}$. By \cite[3.5.5]{DDF}, the actions of $\DC(n)$ and $\afHrC$ on $\OgC^{\ot r}$ are commute. We will identify $\DC(n)$ and $\afUglC$ via the algebra isomorphism $f$ defined in \ref{DDFIsoThm}. Consequently, there is an algebra homomorphism
\begin{equation*}\label{afzrC}
\zrC:\afUglC=\DC(n)\lra\afSrC.
\end{equation*}
It is proved in \cite[3.8.1]{DDF} that $\zrC$ is surjective.
Let $\UglC$ be the subalgebra of $\DC(n)$ generated by $E_i,\ F_i,\  K_j,\ K_j^{-1}$ for  $1\leq i\leq n-1$ and $1\leq j\leq n$.
The restriction of $\zrC$ to $\UglC$ induces a surjective algebra homomorphism
$\zrC:\UglC\lra\SrC$ (cf. \cite{Ji}).
Every $\afSrC$-module (resp., $\SrC$-module) will be inflated into a $\afUglC$-module (resp., $\UglC$-module)
via $\zrC$.

 The following easy lemma relates $\OgC^{\ot r}$ with $\OgnC^{\ot r}$.

\begin{Lem}[{\cite[4.1.1]{DDF}}]\label{Lem1}
There is a $\UglC$-$\afHrC$-bimodule isomorphism
$$\OgnC^{\ot r}\ot_{\HrC}\afHrC\stackrel{\sim}{\lra}\OgC^{\ot r},\;
x\ot h\longmapsto xh.$$
\end{Lem}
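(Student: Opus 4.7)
The plan is to verify that the map $\mu : \OgnC^{\ot r} \ot_{\HrC} \afHrC \to \OgC^{\ot r}$, $x \ot h \mapsto xh$, is a well-defined $\UglC$-$\afHrC$-bimodule homomorphism, and then to establish bijectivity using the Bernstein--Lusztig PBW presentation of $\afHrC$.

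For the bimodule structure, I would first observe from \eqref{QGKMAlg-action} that $\OgnC$ is stable under $\UglC$: among the relevant generators, only $E_i \cdot \og_1$ and $F_i \cdot \og_n$ could escape $\OgnC$, but for $1 \le i \le n-1$ the Kronecker conditions $\dt_{\ol{i+1},\bar 1}$ and $\dt_{\bar i,\bar n}$ both vanish. The coproduct formulas then make $\OgnC^{\ot r}$ a $\UglC$-submodule of $\OgC^{\ot r}$, and the $T_k$-formula shows $\OgnC^{\ot r}$ is a right $\HrC$-submodule, since each $T_k$ permutes or rescales basis vectors $\og_\bfi$ with $\bfi \in I(n,r)$. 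Because the $\afUglC$- and $\afHrC$-actions on $\OgC^{\ot r}$ commute by \cite[3.5.5]{DDF}, the map $\mu$ is automatically a $\UglC$-$\afHrC$-bimodule homomorphism.

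For bijectivity, the main tool is Bernstein's PBW basis $\{T_w X^\bfa : w \in \fSr,\, \bfa \in \mbz^r\}$ of $\afHrC$, which exhibits $\afHrC$ as a free left $\HrC$-module with $\afHrC = \bigoplus_{\bfa \in \mbz^r} \HrC \cdot X^\bfa$. Using this to balance the tensor product, every element of $\OgnC^{\ot r} \ot_{\HrC} \afHrC$ can be written uniquely as $\sum_{\bfa} y_\bfa \ot X^\bfa$ with $y_\bfa \in \OgnC^{\ot r}$, so the tensor product has basis $\{\og_\bfi \ot X^\bfa : \bfi \in I(n,r),\, \bfa \in \mbz^r\}$. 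The defining formula $\og_\bfi \cdot X_t^{-1} = \og_{\bfi + n \bse_t}$ yields $\og_\bfi \cdot X^\bfa = \og_{\bfi - n\bfa}$, so $\mu$ sends this basis to $\{\og_{\bfi - n\bfa}\}$. Since every $\bfj \in \mbz^r$ decomposes uniquely as $\bfj = \bfi - n\bfa$ with $\bfi \in I(n,r)$ and $\bfa \in \mbz^r$ (take $i_t$ to be the representative of $j_t \pmod n$ in $[1,n]$), $\mu$ is a bijection between the standard bases, hence an isomorphism.

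The main obstacle is choosing the correct form of the PBW decomposition. The $X_i$'s do not commute with the $T_k$'s, so the right-module decomposition $\afHrC = \bigoplus_\bfa X^\bfa \HrC$ differs from the left-module decomposition $\afHrC = \bigoplus_\bfa \HrC X^\bfa$, and only the latter allows one to absorb an $\HrC$-prefactor across $\ot_{\HrC}$ so as to leave a clean residual basis $\{X^\bfa\}$ on the right factor of the tensor product. Once the correct version is in hand, the remaining dimension/basis comparison is essentially bookkeeping about residues modulo $n$.
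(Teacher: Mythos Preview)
Your argument is correct. The bimodule-homomorphism part is routine, and the bijectivity argument via the Bernstein PBW decomposition $\afHrC=\bigoplus_{\bfa\in\mbz^r}\HrC\,X^{\bfa}$ together with the explicit formula $\og_{\bfi}\cdot X^{\bfa}=\og_{\bfi-n\bfa}$ and the residue-mod-$n$ bijection $(\bfi,\bfa)\mapsto\bfi-n\bfa$ from $I(n,r)\times\mbz^r$ to $\mbz^r$ is exactly the standard way one proves such an induction isomorphism.

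As for comparison with the paper: the paper does \emph{not} prove this lemma at all. It is quoted verbatim as \cite[4.1.1]{DDF} and used as a black box. So there is no ``paper's own proof'' to compare your approach to; you have supplied a direct proof where the paper only gives a citation. (For what it is worth, the proof in \cite{DDF} proceeds along the same lines as yours, using the freeness of $\afHrC$ over $\HrC$ with basis the Laurent monomials in the $X_j$.)
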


The irreducible $\afHrC$-modules were classified in \cite{Zelevinsky,Rogawski}, which we now describe.
 For
$\bfa=(a_1,\ldots,a_r)\in(\mbc^*)^r$, let $M_{\bfa}=\afHrC/J_\bfa$,
where $J_\bfa$ is the left ideal of $\afHrC$ generated by $X_j-a_j$
for $1\leq j\leq r$.

A {\it segment} $\sfs$ with center $a\in\mbc^*$ is by definition an
ordered sequence
$$\sfs=(a\ttv^{-k+1},a\ttv^{-k+3},\ldots,a\ttv^{k-1})\in(\mbc^*)^k.$$
 Here $k$ is called the length
of the segment, denoted by $|\sfs|$. If
$\bfs=\{\sfs_1,\ldots,\sfs_p\}$ is an unordered collection of
segments, define $\wp(\bfs)$ to be the partition associated with the
sequence $(|\sfs_1|,\ldots,|\sfs_p|)$. That is,
$\wp(\bfs)=(|\sfs_{i_1}|,\ldots,|\sfs_{i_p}|)$ with
$|\sfs_{i_1}|\geq\cdots\geq|\sfs_{i_p}|$, where
$|\sfs_{i_1}|,\ldots,|\sfs_{i_p}|$ is a permutation of
$|\sfs_1|,\ldots,|\sfs_p|$. We also call $|\bfs|:=|\wp(\bfs)|$ the
length of $\bfs$.

Let $\mathscr S_r$ be the set of unordered collections of segments
$\bfs$ with $|\bfs|=r$. Then $\mathscr
S_r=\cup_{\mu\in\Lambda^+(r)}\mathscr S_{r,\mu}$, where $\mathscr
S_{r,\mu}=\{\bfs\in\mathscr S_r\mid\wp(\bfs)=\mu\}$ and $\Lambda^+(r)$ is the set of partitions of $r$.
\index{$\Lambda^+(r)$, set of partitions of $r$}

If $w=s_{i_1}s_{i_2}\cdots s_{i_m}$ is reduced let $T_w=T_{i_1}T_{i_2}\cdots T_{i_m}$.
For $p\geq 1$ let
\begin{equation}\label{Lanr}
\La(p,r)=\{\mu\in\mbn^p\mid\sum_{1\leq i\leq p}\mu_i=r\}
\end{equation}
For $\mu\in\La(p,r)$ let $\frak S_\mu$ be the corresponding standard Young subgroup of the symmetric group $\frak S_r$, and let $\msD_\mu=\{d\in\frak S_r\mid\ell(wd)=\ell(w)+\ell(d)\ \text{for}\ w\in\frak S_\mu\}$.
For $\mu\in\La(p,r)$ let
\begin{equation}\label{Imu}
\sI_\mu=\sH(r)_\mbc
y_\mu,
\end{equation}
 where
$y_{\mu}=\sum_{w\in\fS_\mu}(-\ttv^2)^{-\ell(w)}T_w\in\HrC$.
For $\bfs=\{\sfs_1,\ldots,\sfs_p\}\in\mathscr S_{r,\mu}$, let
$\bfa(\bfs)=(\sfs_1,\ldots,\sfs_p)\in(\mbc^*)^r$
be the $r$-tuple obtained by juxtaposing the segments in $\bfs$.
Let $\iota :\HrC\ra M_{\bfa(\bfs)}$ be the natural $\HrC$-module isomorphism defined by sending $h$ to $\bar h$. Let $$ \bar\sI_\mu=\iota(\sI_\mu)=\sH(r)_\mbc
\bar y_\mu=\afHrC\bar y_\mu.$$
Then,
\begin{equation}\label{signed permutation module}
\sH(r)_\mbc y_\mu\cong E_\mu\oplus(\bigoplus_{\nu\vdash r,
\nu\rhd\la}m_{\nu,\mu}E_\nu),
\end{equation}
where $E_\nu$ is the left cell module defined by the
Kazhdan--Lusztig's C-basis \cite{KL79} associated with the left cell
containing $w_{0,\nu}$.

Let $V_\bfs$ be the unique composition factor of the $\afHrC$-module
$\afHrC \bar y_\mu$ such that the multiplicity of $E_{\mu}$ in
$V_\bfs$ as an $\sH(r)_\mbc$-module is nonzero.

The following classification theorem is due to
Zelevinsky \cite{Zelevinsky} and Rogawski \cite{Rogawski}.

\begin{Thm}\label{classification irr affine Hecke algebra}
The modules $ V_\bfs$ with $\bfs\in\mathscr S_r$ are all nonisomorphic finite dimensional
irreducible $\afHrC$-modules.
\end{Thm}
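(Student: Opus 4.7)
The plan is to establish the theorem in three stages: existence and irreducibility of each $V_\bfs$, pairwise non-isomorphism, and completeness.

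For existence, first I would verify that $\afHrC\bar y_\mu$ is finite dimensional. This uses the Bernstein presentation: $\afHrC$ is a free module over $\mbc[X_1^{\pm1},\ldots,X_r^{\pm1}]\otimes\sH(r)_\mbc$, and modulo $J_{\bfa(\bfs)}$ the $X_j$'s act by the scalars given by $\bfa(\bfs)$, so $\afHrC\bar y_\mu$ has finite $\sH(r)_\mbc$-length. Next, combining the $\sH(r)_\mbc$-module isomorphism $\sH(r)_\mbc y_\mu\cong\bar{\sI}_\mu$ with the cell decomposition \eqref{signed permutation module}, $E_\mu$ appears with multiplicity exactly one in $\bar{\sI}_\mu$, and an argument using the dominance triangularity of Kazhdan--Lusztig multiplicities shows no other $\sH(r)_\mbc$-composition factor of $\afHrC\bar y_\mu$ contributes another copy of $E_\mu$. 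Hence there is a unique composition factor $V_\bfs$ in which $E_\mu$ occurs, and $V_\bfs$ is automatically an irreducible finite dimensional $\afHrC$-module.

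For non-isomorphism, take $\bfs\neq\bfs'$ with $\mu=\wp(\bfs)$ and $\mu'=\wp(\bfs')$. If $\mu\neq\mu'$, the $\sH(r)_\mbc$-socle data distinguishes $V_\bfs$ from $V_{\bfs'}$: by construction $V_\bfs$ contains $E_\mu$ but no $E_\nu$ with $\nu\rhd\mu$, so the ``minimal'' Young partition labeling the cell modules in $V_\bfs|_{\sH(r)_\mbc}$ is $\mu$. If $\mu=\mu'$ but $\bfs\neq\bfs'$, I would separate the multisets of entries of $\bfa(\bfs)$ and $\bfa(\bfs')$: when these multisets differ, the central character of $\afHrC$ (which by Bernstein's theorem is $\mbc[X_1^{\pm1},\ldots,X_r^{\pm1}]^{\fS_r}$) separates $V_\bfs$ and $V_{\bfs'}$; when the multisets coincide, the segment partition is forced to coincide, so this case does not arise.

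For completeness, let $V$ be a finite dimensional irreducible $\afHrC$-module. Since $X_1,\ldots,X_r$ generate a commutative subalgebra, choose a joint generalized eigenvector with eigenvalue $\bfa=(a_1,\ldots,a_r)\in(\mbc^*)^r$, so that $V$ is a quotient of $M_\bfa=\afHrC/J_\bfa$. Using the standard intertwiner construction $\Phi_i=(T_i-\ttv^2)+\frac{\ttv^2-1}{1-X_iX_{i+1}^{-1}}$ (or an equivalent rearrangement valid in characteristic zero), I would permute the entries of $\bfa$ into an ordering obtained by juxtaposing segments, yielding $\bfa=\bfa(\bfs)$ for some $\bfs$. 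The antisymmetrizer $\bar y_\mu$ with $\mu=\wp(\bfs)$ then gives rise to a nonzero vector in $V$ killed by the relevant $T_i$'s, so $V$ is a quotient of $\afHrC\bar y_\mu$; in particular $V|_{\sH(r)_\mbc}$ has $E_\mu$ as a composition factor, forcing $V\cong V_\bfs$ by the uniqueness already established.

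The main obstacle is the reduction-to-segments step in the completeness argument: controlling the intertwiners so that segment structure actually emerges requires the Bernstein--Zelevinsky analysis of principal series, and one must handle carefully the poles of the intertwiners at $X_iX_{i+1}^{-1}=1$ which is precisely where the segment pattern is forced. All other steps reduce to bookkeeping with Kazhdan--Lusztig cells and central characters.
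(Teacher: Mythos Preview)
The paper does not give its own proof of this theorem: it is simply quoted as the Zelevinsky--Rogawski classification, with a citation, and is used only as input for the later results. So there is no argument in the paper against which to compare your sketch.

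Independently of that, your non-isomorphism step contains a genuine error. You assert that if $\wp(\bfs)=\wp(\bfs')$ and the multisets of entries of $\bfa(\bfs)$ and $\bfa(\bfs')$ agree, then $\bfs=\bfs'$. This is false: take $r=4$, $\mu=(3,1)$, and
\[
\bfs=\bigl\{(a\ttv^{-2},a,a\ttv^{2}),\,(a\ttv^{4})\bigr\},\qquad
\bfs'=\bigl\{(a,a\ttv^{2},a\ttv^{4}),\,(a\ttv^{-2})\bigr\}.
\]
Both have $\wp=(3,1)$ and identical entry multiset $\{a\ttv^{-2},a,a\ttv^{2},a\ttv^{4}\}$, hence the same Bernstein central character, yet $\bfs\neq\bfs'$. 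Distinguishing $V_{\bfs}$ from $V_{\bfs'}$ in such linked situations is exactly the substance of the Zelevinsky--Rogawski analysis (structure of intertwining operators between standard modules, or equivalently Jacquet--module combinatorics); it cannot be achieved with the coarse invariants you invoke. A parallel gap appears in your completeness argument: after arranging $\bfa$ into segment form and exhibiting $V$ as a subquotient of $M_{\bfa(\bfs)}$, you still need to know that $V$ actually sits inside $\afHrC\bar y_\mu$ and is the composition factor containing $E_\mu$ rather than one containing only some $E_\nu$ with $\nu\rhd\mu$. The usual fix is to prove that each $\bar\sI_\mu$ has a \emph{unique} irreducible quotient and that every irreducible arises as such a quotient for a unique $\bfs$---both statements require the full Bernstein--Zelevinsky machinery you defer to at the end, not merely the existence of intertwiners.
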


Let $\afSrC\hmod$ (resp., $\afHrC\hmod$) be the category of finite dimensional $\afSrC$-modules (resp., $\afHrC$-modules). The categories $\afSrC\hmod$ and $\afHrC\hmod$ are related by the Schur functor $\sF$, which we now define.
Using the $\afSrC$-$\afHrC$-bimodule $\OgC^{\ot r}$, we define a functor
\begin{equation}\label{functor sF}
\sF=\sF_{n,r}:\afHrC\hmod\lra\afSrC\hmod,\;V\longmapsto\OgC^{\ot
r}\ot_{\afHrC}V.
\end{equation}

Let $$\ms S_r^{(n)}=\{\bfs=
\{\sfs_1,\ldots,\sfs_p\}\in\ms S_r,\,p\geq 1,\,|\sfs_i|\leq n,\,\forall i\}.$$
The following classification theorem is given in \cite[4.3.4 and 4.5.3]{DDF}.

\begin{Lem}\label{classification 1}
For $\bfs\in\mathscr S_r$
we have $\sF(V_\bfs)\neq0$
if and only if $\bfs\in\mathscr S_{r}^{(n)}$. Furthermore, the set
 $$\{\sF(V_\bfs)\mid\bfs\in\mathscr S_{r}^{(n)}\}$$
is a complete set of nonisomorphic finite dimensional irreducible
$\sS_\vtg(n,r)_\mbc$-modules.
\end{Lem}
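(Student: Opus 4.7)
My plan is to reduce to the finite Hecke algebra via Lemma \ref{Lem1} and then combine classical $q$-Schur--Weyl duality with the decomposition \eqref{signed permutation module}. The $\UglC$-$\afHrC$-bimodule isomorphism of Lemma \ref{Lem1} yields, for every $V\in\afHrC\hmod$,
$$\sF(V)=\OgC^{\ot r}\ot_{\afHrC}V\cong\OgnC^{\ot r}\ot_{\HrC}V|_{\HrC},$$
so $\sF(V)\neq 0$ iff the right $\HrC$-module $\OgnC^{\ot r}$ and the left $\HrC$-module $V|_{\HrC}$ share a simple composition factor. Classical Dipper--James $q$-Schur--Weyl duality identifies the $\HrC$-constituents of $\OgnC^{\ot r}$ as precisely the cell modules $E_\nu$ with $\nu_1\leq n$.

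For the nonvanishing criterion, set $\mu=\wp(\bfs)$. By the definition of $V_\bfs$ as the unique composition factor of $\afHrC\bar y_\mu$ with $E_\mu$ occurring in its $\HrC$-restriction, together with \eqref{signed permutation module}, every $\HrC$-composition factor of $V_\bfs|_{\HrC}$ has the form $E_\nu$ with $\nu=\mu$ or $\nu\rhd\mu$; in either case $\nu_1\geq\mu_1=\max_i|\sfs_i|$. Hence if $\bfs\in\ms S_r^{(n)}$, the constituent $E_\mu$ itself lies in both $V_\bfs|_{\HrC}$ and $\OgnC^{\ot r}$ and $\sF(V_\bfs)\neq 0$; if some $|\sfs_i|>n$, every $E_\nu$ occurring in $V_\bfs|_{\HrC}$ satisfies $\nu_1>n$, no such $E_\nu$ occurs in $\OgnC^{\ot r}$, and $\sF(V_\bfs)=0$.

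For the classification, I would realize $\sF$ as $\afSrC e\ot_{\afHrC}(-)$ for an idempotent $e\in\afSrC$ satisfying $e\afSrC e\cong\afHrC$, with right adjoint $\sG(M)=eM$. A direct computation gives $\sG\circ\sF\cong\mathrm{id}$, so $\sF$ is fully faithful, preserves irreducibility where it is nonzero, and is injective on iso-classes. For essential surjectivity onto irreducibles, it suffices to show $\sG(M)\neq 0$ for every irreducible $M\in\afSrC\hmod$; granted this, any irreducible quotient $V_\bfs$ of $\sG(M)$ produces a nonzero counit map $\sF(V_\bfs)\to M$ which must be an isomorphism, and $V_\bfs\in\ms S_r^{(n)}$ automatically by the first part.

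The main obstacle is the case $n<r$, in which the weight $(1^r)$ is absent from $\Lambda_\vtg(n,r)$ and no idempotent $e$ selecting it is available in the naive sense. One must instead work with a saturated-set truncation in the spirit of Green \cite{Gr80}, building a larger $e$ and verifying that $e\afSrC e$ still realizes $\afHrC$ as a Morita-equivalent subalgebra on the image of $\sF$. The required nonvanishing $eM\neq 0$ for irreducible $M$ then follows from the generation of $\afSrC\hmod$ by $\OgC^{\ot r}$, which in turn rests on the surjectivity of $\zrC$ from \cite[3.8.1]{DDF}; combined with the previous paragraphs, this completes the proof.
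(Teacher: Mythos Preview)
The paper does not prove Lemma~\ref{classification 1}; it is quoted from \cite[4.3.4 and 4.5.3]{DDF}. So there is no in-paper proof to match, but the strategy of \cite{DDF}, visible in the proofs of Lemma~\ref{finite case} and Theorem~\ref{prop of functor sG} here, is the benchmark.

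Your argument for the nonvanishing criterion is correct and clean: reducing via Lemma~\ref{Lem1} to $\OgnC^{\ot r}\ot_{\HrC}V_\bfs|_{\HrC}$, observing from \eqref{signed permutation module} that every $\HrC$-constituent $E_\nu$ of $V_\bfs$ satisfies $\nu\unrhd\mu=\wp(\bfs)$ (hence $\nu_1\ge\mu_1$), and invoking that $E_\nu$ occurs in $\OgnC^{\ot r}$ iff $\nu_1\le n$. This is exactly the mechanism behind \cite[4.3.4]{DDF}.

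The classification half has a genuine gap. For $n<r$ there is \emph{no} idempotent $e\in\afSrC$ with $e\afSrC e\cong\afHrC$, and your claim that $\sG\circ\sF\cong\mathrm{id}$ fails: the natural map $\afHrC\to\End_{\afSrC}(\OgC^{\ot r})$ has nonzero kernel (precisely the ideal killed by the $E_\nu$ with $\nu_1>n$), so $\sF$ is not full and need not preserve irreducibility by your argument. Your proposed fix---``building a larger $e$'' inside $\afSrC$ and citing surjectivity of $\zrC$---does not address this; surjectivity of $\zrC$ says nothing about $\OgC^{\ot r}$ generating $\afSrC\hmod$, and no saturated-set truncation inside $\afSrC$ recovers $\afHrC$.

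The approach in \cite{DDF} (and implicitly in this paper) places the idempotent in the \emph{larger} algebra: choose $N\ge r$, so that $\sF_{N,r}$ is an equivalence \cite[4.1.3]{DDF}, and write $\sF_{n,r}=\sG_{N,n,r}\circ\sF_{N,r}$ where $\sG_{N,n,r}(M)=eM$ for $e=\sum_{\la\in\Lambda(n,r)}\bffkk_\la\in\sS_\vtg(N,r)_\mbc$ and $e\sS_\vtg(N,r)_\mbc e\cong\afSrC$ \cite[4.3.3]{DDF}. Green's idempotent truncation \cite[6.2(g)]{Gr80} then shows $\sG_{N,n,r}$ sends simples to simples or zero with the nonzero images exhausting the simples of $\afSrC$; combined with your Part~1, this yields the lemma. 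The missing idea in your proposal is that the idempotent must live one level up, in $\sS_\vtg(N,r)_\mbc$, not in $\afSrC$ itself.
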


The following result, which will be used in \ref{main theorem},
is taken from \cite[7.6]{CP96} and \cite[4.4.2 and 4.6.5]{DDF}.
\begin{Lem}\label{n geq r}
Assume $n\geq r$. Let $\bfs=(a\ttv^{-r+1},a\ttv^{-r+3},\cdots,a\ttv^{r-1})$ be a single segment and $\mu=\wp(\bfs)=(r)$. Then $V_\bfs=\bar\sI_\mu$ and
$\sF(V_\bfs)\cong L(\bfQ),$
where $\bfQ=(Q_1(u),\cdots,Q_n(u))$ with $Q_n(u)=(1-a\ttv^{-n+1}u)^{\dt_{n,r}}$ and $\frac{Q_i (u\ttv^{i-1})} {Q_{i+1} (u\ttv^{i+1})}=(1-au)^{\dt_{i,r}}$ for $1\leq i\leq n-1$.
\end{Lem}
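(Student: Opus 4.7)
The plan is to establish the identification $V_\bfs=\bar\sI_\mu$ first, and then compute $\sF(V_\bfs)$ by separating into the two cases $n>r$ and $n=r$. The boundary case $n=r$ will be the main obstacle, since it lies outside the range directly covered by \cite[7.6]{CP96}.

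For the identification $V_\bfs=\bar\sI_\mu$, I would observe that $\mu=(r)$ is the unique maximum of $\Lambda^+(r)$ under the dominance order, so no $\nu\vdash r$ satisfies $\nu\rhd\mu$. The decomposition \eqref{signed permutation module} then collapses to $\sI_\mu\cong E_\mu$ as an $\HrC$-module -- the one-dimensional sign representation. Applying the isomorphism $\iota$ gives $\bar\sI_\mu|_{\HrC}\cong E_{(r)}$, so $\bar\sI_\mu$ is one-dimensional, hence already irreducible as an $\afHrC$-module, and the multiplicity of $E_\mu$ in its $\HrC$-restriction is $1\neq 0$. The defining property of $V_\bfs$ then forces $V_\bfs=\bar\sI_\mu$.

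For $\sF(V_\bfs)$ in the case $n>r$, the statement is exactly \cite[4.4.2]{DDF}, which is derived from \cite[7.6]{CP96} combined with the identification $\sF(V)|_{\afUslC}\cong\mc F(V)$ of Schur functors. Here $\delta_{n,r}=0$ gives $Q_n(u)=1$, while the ratios $Q_i(u\ttv^{i-1})/Q_{i+1}(u\ttv^{i+1})=(1-au)^{\delta_{i,r}}$ match the Chari--Pressley polynomials $P_i(u)$ describing the $\afUslC$-module $\bar L(\bfP)$.

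For the boundary case $n=r$, restricting $\sF(V_\bfs)$ to $\afUslC$ and applying \cite[7.6]{CP96} still determines the ratios $Q_i(u\ttv^{i-1})/Q_{i+1}(u\ttv^{i+1})$ for $1\leq i\leq n-1$, and since $r=n$ we have $\delta_{i,r}=0$ throughout this range, so all ratios equal $1$. The main obstacle is that this restriction is blind to $Q_n(u)$, which is detected only through the central elements $\sfz_t^\pm$ of $\DC(n)$. My plan is to compute the action of the elements $\tth_{\pm s}$ from Lemma \ref{DDFIsoThm} -- equivalently, of $\ms Q_n^\pm(u)$ -- on a pseudo-highest weight vector of $\sF(V_\bfs)$, using the bimodule realization $\sF(V_\bfs)=\OgC^{\ot r}\ot_{\afHrC}V_\bfs$, the reduction to the finite tensor space provided by Lemma \ref{Lem1}, the formulas \eqref{QGKMAlg-action}, and the fact that $V_\bfs$ is one-dimensional with each $X_j$ acting by the segment entry $a\ttv^{-r+2j-1}$. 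Reading off the resulting generating function should produce $Q_n(u)=(1-a\ttv^{-n+1}u)$. This calculation is precisely what is carried out in \cite[4.6.5]{DDF}, which I would invoke to conclude.
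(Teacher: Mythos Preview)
Your proposal is correct and takes essentially the same approach as the paper: the lemma is not proved in the paper but simply attributed to \cite[7.6]{CP96} and \cite[4.4.2 and 4.6.5]{DDF}, and your outline unpacks exactly this attribution---invoking \cite[4.4.2]{DDF} (built on \cite[7.6]{CP96}) for $n>r$ and \cite[4.6.5]{DDF} for the boundary case $n=r$, together with the easy dominance argument for $V_\bfs=\bar\sI_\mu$. The only looseness is your phrasing that \cite[7.6]{CP96} ``still determines the ratios'' when $n=r$, since that reference is stated for $n>r$; but since you ultimately defer the $n=r$ computation to \cite[4.6.5]{DDF} anyway, this does not affect the argument.
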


\section{Identification of irreducible $\afSrC$-modules}
In this section we will prove that $\sF(\bar\sI_{\wp(\bfs)})$ is isomorphic to the tensor product of irreducible $\afSrC$-modules for $\bfs\in\ms S_r^{(n)}$ and $\sF(\bar\sI_{\wp(\bfs)})=0$ for $\bfs\not\in\ms S_r^{(n)}$ in \ref{standard module}. Using this result, we will relate the parametrization of irreducible $\afHrC$-modules, via the functor $\sF$ defined in \eqref{functor sF}, to the parametrization of finite dimensional irreducible polynomial representations of $\afUglC$ in \ref{main theorem}. As applications, we will classify finite dimensional irreducible $\afSrC$-modules in
\ref{classification 2}, and generalize \cite[(6.5f)]{Gr80} to the affine case.

To compute $\sF(\bar\sI_{\wp(\bfs)})$, we need  a result of Rogawski \cite[4.3]{Rogawski}, which we now describe.
For $1\leq j\leq p$, let $\sH_{\mu,j}$ be the subalgebra of $\HrC$ generated by $T_i$ with $s_i\in\fS_{\mu^{(j)}}$,
where $$\mu^{(j)}=(1^{\mu_{[1,j-1]}},\mu_j,1^{r-\mu_{[1,j]}}),$$
and $\mu_{[1,j]}=\mu_1+\mu_2+\cdots+\mu_{j}$.
Since $\sH_{\mu,j}\cong\sH(\mu_j)_\mbc$ for $1\leq j\leq p$ and $\OgnC^{\ot\mu_j}$ is a right $\sH(\mu_j)_\mbc$-module, $\OgnC^{\ot\mu_j}$ can be also regarded as a right $\sH_{\mu,j}$-module.

Recall the notation $\sI_\mu$ defined in \eqref{Imu}.
For $\mu\in\La(p,r)$ and $1\leq j\leq p$  let $$\sJ_\mu=
\bigcap_{s_i\in\fS_{\mu}\atop 1\leq i\leq r-1}\HrC C_i,
\quad  \sJ_{\mu,j}=\bigcap_{s_i\in\fS_{\mu^{(j)}}
\atop 1\leq i\leq r-1}\sH_{\mu,j} C_i\quad\text{and}\quad\sI_{\mu,j}=\sH_{\mu,j}y_{\mu^{(j)}}.$$
where $C_i=\ttv^{-1}T_i-\ttv$ and
$y_{\mu^{(j)}}=\sum_{w\in\fS_{\mu^{(j)}}}(-\ttv^2)^{-\ell(w)}T_w$.
 By \cite[4.3]{Rogawski} we have the following result.
\begin{Lem}\label{Rogawski}
We have $\sI_\mu=\sJ_\mu$, $\sI_{\mu,j}=\sJ_{\mu,j}$ for $\mu\in\La(p,r)$ and $1\leq j\leq p$.
\end{Lem}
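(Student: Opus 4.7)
The plan is to reduce both identities to the single characterization
\begin{equation*}
\sH(r)_\mbc C_i = \{x \in \sH(r)_\mbc : x(T_i+1) = 0\}.
\end{equation*}
A short calculation using $T_i^2 = (\ttv^2-1)T_i + \ttv^2$ gives $C_iT_i = T_iC_i = -C_i$, hence $C_i(T_i+1) = 0$, yielding the inclusion $\subseteq$. For $\supseteq$, decompose $\sH(r)_\mbc = \sH^i \oplus \sH^i T_i$ where $\sH^i := \spann\{T_w : \ell(ws_i) > \ell(w)\}$; writing $x = a + bT_i$ with $a,b\in\sH^i$ and expanding $x(T_i+1)=0$ forces $a = -\ttv^2 b$, so $x = b(T_i-\ttv^2) = \ttv\, b C_i$. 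Intersecting over $s_i\in\fS_\mu$ then gives $\sJ_\mu = \{x\in\sH(r)_\mbc : x(T_i+1)=0\ \text{for all}\ s_i\in\fS_\mu\}$, and analogously for $\sJ_{\mu,j}$ inside $\sH_{\mu,j}$.

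The inclusion $\sI_\mu\subseteq\sJ_\mu$ is then immediate from the standard identity $y_\mu T_i = -y_\mu$ for $s_i\in\fS_\mu$, obtained by pairing each $w\in\fS_\mu$ with $ws_i$. For the reverse $\sJ_\mu\subseteq\sI_\mu$, I would use the left coset decomposition $\fS_r = \bigsqcup_{d\in\msD_\mu} d^{-1}\fS_\mu$ to write $\sH(r)_\mbc = \bigoplus_{d\in\msD_\mu} T_{d^{-1}}\sH(\fS_\mu)$ as a free right module over the parabolic subalgebra $\sH(\fS_\mu):=\spann\{T_w : w\in\fS_\mu\}$. Given $x = \sum_d T_{d^{-1}} x_d \in \sJ_\mu$, the condition forces each $x_d\in\sH(\fS_\mu)$ to satisfy $x_d(T_i+1) = 0$ for every $s_i\in\fS_\mu$; the problem thus reduces to showing that the right sign-isotypic component of $\sH(\fS_\mu)$ equals $\mbc y_\mu$, whereupon $x = \sum_d \alpha_d T_{d^{-1}} y_\mu \in \sH(r)_\mbc y_\mu = \sI_\mu$.

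The second identity $\sI_{\mu,j}=\sJ_{\mu,j}$ then follows by applying the same argument inside $\sH_{\mu,j}\cong\sH(\mu_j)_\mbc$ with parabolic $\fS_{\mu^{(j)}}$. The only nontrivial ingredient is the one-dimensionality of the right sign-isotypic component of the finite Hecke algebra $\sH(\fS_\mu)$: since $\ttv$ is not a root of unity, $\sH(\fS_\mu)$ is semisimple and abstractly isomorphic to $\mbc[\fS_\mu]$, so the one-dimensional sign representation occurs exactly once in the regular representation, and $y_\mu$ manifestly lies in this component. This is the main (minor) obstacle; the rest is bookkeeping, and the overall structure mirrors Rogawski's argument in \cite[4.3]{Rogawski}.
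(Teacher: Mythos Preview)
The paper does not actually prove this lemma; it simply records it as a consequence of \cite[4.3]{Rogawski}. Your proposal supplies a correct self-contained argument. The key steps --- the characterization $\sH(r)_\mbc C_i = \{x : x(T_i+1)=0\}$ via the decomposition $\sH(r)_\mbc = \sH^i \oplus \sH^i T_i$, the identity $y_\mu T_i = -y_\mu$ for $s_i\in\fS_\mu$, and the reduction to the parabolic subalgebra via $\sH(r)_\mbc = \bigoplus_{d\in\msD_\mu} T_{d^{-1}}\sH(\fS_\mu)$ --- are all verified straightforwardly, and the semisimplicity of $\sH(\fS_\mu)$ (valid since $\ttv$ is not a root of unity) indeed gives the one-dimensionality of the right sign-isotypic component. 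As you yourself note, this is essentially Rogawski's own argument, so there is no substantive divergence from the cited source; you have simply written out what the paper leaves as a reference.
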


\begin{Lem}\label{Lem3}
Assume $I$ is a left ideal of $\HrC$. Then $\OgnC^{\ot r}\ot_{\HrC}I\cong\OgnC^{\ot r}I$.
\end{Lem}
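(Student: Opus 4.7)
The plan is to deduce the lemma from flatness (in fact projectivity) of the tensor space $\OgnC^{\ot r}$ as a right $\HrC$-module, combined with the tautological multiplication map.

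First, I would recall the well-known direct sum decomposition
$$\OgnC^{\ot r}\;\cong\;\bigoplus_{\mu\in\La(n,r)} x_\mu\HrC$$
as right $\HrC$-modules, where $x_\mu$ is the $\ttv$-symmetrizer on the Young subgroup $\fS_\mu$ (an element analogous to $y_\mu$ from \eqref{Imu}). This element satisfies a quadratic relation $x_\mu^2=c_\mu x_\mu$ with $c_\mu\in\mbc$ a Poincar\'e polynomial in $\ttv^{\pm 2}$. Since $\ttv$ is not a root of unity, $c_\mu$ is invertible, so $c_\mu^{-1}x_\mu$ is an idempotent in $\HrC$ and each $x_\mu\HrC$ is a direct summand of $\HrC$ as a right $\HrC$-module. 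Summing over $\mu$ shows that $\OgnC^{\ot r}$ is projective, hence flat, as a right $\HrC$-module.

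Next, I would introduce the natural multiplication map
$$\vi:\OgnC^{\ot r}\ot_{\HrC} I \lra \OgnC^{\ot r},\qquad x\ot h\lm xh.$$
By construction the image of $\vi$ is precisely $\OgnC^{\ot r}I$, so $\vi$ is surjective onto $\OgnC^{\ot r}I$. For injectivity, consider the short exact sequence of left $\HrC$-modules $0\to I\to\HrC\to\HrC/I\to 0$, and apply $\OgnC^{\ot r}\ot_{\HrC}(-)$, which is exact by the first step. This yields an injection $\OgnC^{\ot r}\ot_{\HrC} I\hookrightarrow \OgnC^{\ot r}\ot_{\HrC}\HrC\cong \OgnC^{\ot r}$, and this injection is visibly $\vi$. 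Therefore $\vi$ restricts to an isomorphism $\OgnC^{\ot r}\ot_{\HrC} I\stackrel{\sim}{\to}\OgnC^{\ot r}I$.

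The only mildly subtle point is the projective decomposition of $\OgnC^{\ot r}$ as a right $\HrC$-module; once that is in hand, the rest is formal. In the present setting I would simply cite this decomposition from \cite{DDF} (where it underlies the double centralizer property and the very definition of $\SrC$) rather than reprove it, so the argument reduces to one paragraph in the actual text.
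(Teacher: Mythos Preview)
Your argument is correct. The paper, however, takes a shorter route: it uses directly that $\HrC$ is semisimple (since $\ttv$ is not a root of unity), so the left ideal $I$ admits a complement $J$ with $\HrC=I\oplus J$. Tensoring then gives $\OgnC^{\ot r}\cong(\OgnC^{\ot r}\ot_{\HrC}I)\oplus(\OgnC^{\ot r}\ot_{\HrC}J)$, from which injectivity of the multiplication map $w\ot h\mapsto wh$ is immediate. Thus the paper exploits splitting of $I$ inside $\HrC$, whereas you exploit flatness of $\OgnC^{\ot r}$ over $\HrC$. Your approach has the advantage that it would still go through in settings where the tensor space is projective but the Hecke algebra is not semisimple; on the other hand, once one has semisimplicity of $\HrC$, the paper's argument is a one-liner and avoids invoking the permutation-module decomposition (which, incidentally, could itself be replaced simply by the observation that over a semisimple ring every module is projective).
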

\begin{proof}
Since $\HrC$ is semisimple, there exist a left ideal $J$ of $\HrC$ such that $\HrC=I\op J$. Then $\OgnC^{\ot r}\cong \OgnC^{\ot r}\ot_{\HrC}\HrC\cong \OgnC^{\ot r}\ot_{\HrC}I\op \OgnC^{\ot r}\ot_{\HrC}J$. Thus the natural linear map $f:\OgnC^{\ot r}\ot_{\HrC}I\ra\OgnC^{\ot r}$ defined by sending $w\ot h$ to $wh$ is injective. Consequently, $\OgnC^{\ot r}\ot_{\HrC}I\cong Im(f)=\OgnC^{\ot r}I$.
\end{proof}

By \ref{Lem1}, \ref{Rogawski} and \ref{Lem3} we conclude that $\sF(\bar\sI_\mu)\cong\OgnC^{\ot r}\ot_{\HrC}\bar\sJ_\mu\cong\OgnC^{\ot r}\sJ_\mu$, where $\mu=\wp(\bfs)$ for some $\bfs\in\ms S_r$. We now compute $\OgnC^{\ot r}\sJ_\mu$.

\begin{Lem}\label{Lem2}
For $\mu\in\La(p,r)$, we have $$\OgnC^{\ot r}\sJ_\mu
= \OgnC^{\ot\mu_1}
\sJ_{\mu,1}
\ot\cdots\ot \OgnC^{\ot\mu_p}
\sJ_{\mu,p}.$$
\end{Lem}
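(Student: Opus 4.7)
The plan is to unfold both sides using Lemma \ref{Rogawski} to replace the $\sJ$-ideals by their $y$-generated descriptions, then factorize the symmetrizer $y_\mu$ along the blocks of $\mu$, and finally exploit the tensor decomposition of $\OgnC^{\ot r}$ into blocks. Concretely, by Lemma \ref{Rogawski} we have $\sJ_\mu=\sH(r)_\mbc y_\mu$ and $\sJ_{\mu,j}=\sH_{\mu,j}y_{\mu^{(j)}}$; since the unit lies in each Hecke subalgebra, the claim reduces to showing
\[
\OgnC^{\ot r}\,y_\mu \;=\; \bigl(\OgnC^{\ot\mu_1}y_{\mu^{(1)}}\bigr)\ot\cdots\ot\bigl(\OgnC^{\ot\mu_p}y_{\mu^{(p)}}\bigr),
\]
where each $y_{\mu^{(j)}}$ is interpreted through the natural identification $\sH_{\mu,j}\cong\sH(\mu_j)_\mbc$ acting on $\OgnC^{\ot\mu_j}$.

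Next I would factorize $y_\mu$. The Young subgroup decomposes as $\fS_\mu=\fS_{\mu^{(1)}}\times\cdots\times\fS_{\mu^{(p)}}$, so each $w\in\fS_\mu$ has a unique factorization $w=w_1\cdots w_p$ with $w_j\in\fS_{\mu^{(j)}}$; because the $w_j$ act on disjoint consecutive blocks of positions, one gets length additivity $\ell(w)=\sum_j\ell(w_j)$ and hence $T_w=T_{w_1}\cdots T_{w_p}$. Summing over $\fS_\mu$ yields $y_\mu=y_{\mu^{(1)}}\cdots y_{\mu^{(p)}}$. Moreover, simple reflections from different $\fS_{\mu^{(j)}}$ lie at distance $\ge 2$ in the Dynkin diagram of $\fS_r$, so the braid relations collapse to commutation and the factors $y_{\mu^{(j)}}$ commute pairwise.

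Finally I would combine this with the block decomposition $\OgnC^{\ot r}=\OgnC^{\ot\mu_1}\ot\cdots\ot\OgnC^{\ot\mu_p}$. The crucial observation is that any generator $T_i$ of $\sH_{\mu,j}$ has $i,i+1$ both in the $j$-th block, hence acts as the identity on all tensor factors except the $j$-th; consequently $y_{\mu^{(j)}}$ acts as $1\ot\cdots\ot\tilde y_j\ot\cdots\ot 1$, where $\tilde y_j$ is the image of $y_{\mu^{(j)}}$ under $\sH_{\mu,j}\cong\sH(\mu_j)_\mbc$. Applying the factors $y_{\mu^{(1)}},\ldots,y_{\mu^{(p)}}$ in turn then yields the tensor product on the right, which combined with the first step completes the argument.

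The only delicate point is the block-locality verification in the last step, together with tracking the identification $\sH_{\mu,j}\cong\sH(\mu_j)_\mbc$; this is essentially a bookkeeping check and does not constitute a real obstacle. No deeper technical ingredient beyond Rogawski's lemma and the standard factorization of parabolic symmetrizers is required.
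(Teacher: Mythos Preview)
Your argument is correct. It differs from the paper's in that you work entirely in the $\sI$-picture after a single application of Lemma~\ref{Rogawski}: the factorization $y_\mu=y_{\mu^{(1)}}\cdots y_{\mu^{(p)}}$ together with block-locality shows that right multiplication by $y_\mu$ on $\OgnC^{\ot r}$ is the tensor product of the maps $\tilde y_j$, and the image of a tensor product of linear maps is the tensor product of the images, giving both inclusions at once. The paper instead proves the two inclusions separately, toggling between the $\sJ$- and $\sI$-descriptions: for $\subseteq$ it uses $\sJ_\mu=\bigcap_j\sJ_{\mu^{(j)}}$ and then Rogawski to rewrite $\sJ_{\mu^{(j)}}=\sX_{\mu,j}\sJ_{\mu,j}$, while for $\supseteq$ it checks directly that $h_1\cdots h_p\in\sJ_\mu$ via the intersection definition. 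Your route is shorter and more transparent; the paper's route has the minor advantage that the $\supseteq$ direction does not rely on Rogawski (only on commutativity of the $\sH_{\mu,j}$), but this is not needed elsewhere.
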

\begin{proof}
Since $\sJ_\mu=\cap_{1\leq j\leq p} \sJ_{\mu^{(j)}}$ we have $\OgnC^{\ot r}\sJ_\mu\han
\bigcap_{1\leq j\leq p}\big(\OgnC^{\ot r} \sJ_{\mu^{(j)}}\big)$. Furthermore by \ref{Rogawski}  we have $\sJ_{\mu^{(j)}}=\sI_{\mu^{(j)}}=\sX_{\mu,j}\sI_{\mu,j}
=\sX_{\mu,j}\sJ_{\mu,j}$ where $\sX_{\mu,j}=\spann\{T_w\mid w\in\msD_{\mu^{(j)}}^{-1}\}$. This implies that $$\OgnC^{\ot r}  \sJ_{\mu^{(j)}}=
\OgnC^{\ot r}\sJ_{\mu,j}=\OgnC^{\mu_1}\ot\cdots\ot\OgnC^{\mu_{j-1}}
\ot\OgnC^{\ot \mu_j}\sJ_{\mu_j}\ot\OgnC^{\ot\mu_{j+1}}\ot\cdots\ot
\OgnC^{\ot\mu_p}$$ for $1\leq j\leq p$. Thus,
$$\OgnC^{\ot r}\sJ_\mu\han
\bigcap_{1\leq j\leq p}\big(
\OgnC^{\mu_1}\ot\cdots\ot\OgnC^{\mu_{j-1}}
\ot\OgnC^{\ot \mu_j}\sJ_{\mu_j}\ot\OgnC^{\ot\mu_{j+1}}\ot\cdots\ot
\OgnC^{\ot\mu_p}
\big)=
\OgnC^{\ot\mu_1}
\sJ_{\mu,1}
\ot\cdots\ot \OgnC^{\ot\mu_p}
\sJ_{\mu,p}.$$ On the other hand, we assume $w_1h_1\ot\cdots\ot w_ph_p\in\OgnC^{\ot\mu_1}
\sJ_{\mu,1} \ot\cdots\ot \OgnC^{\ot\mu_p} \sJ_{\mu,p}$, where  $w_j\in\OgnC^{\ot\mu_j}$
and $h_j\in \sJ_{\mu,j}$. Since $h_kh_l=h_lh_k$ for any $k,l$ and $h_j\in \sJ_{\mu,j}$, we have $h_1h_2\cdots h_p=(h_1\cdots h_{j-1}h_{j+1}\cdots h_p)h_j\in \HrC \sJ_{\mu,j}\han \HrC C_i$ for $1\leq i\leq r-1$, $1\leq j\leq p$  with $s_i\in\frak S_{\mu^{(j)}}$. This implies that $h_1h_2\cdots h_p\in\sJ_\mu$. It follows that $w_1h_1\ot\cdots\ot w_ph_p=(w_1 \ot\cdots\ot w_p)h_1\cdots h_p\in \OgnC^{\ot r}\sJ_\mu$.
The assertion follows.
\end{proof}

For $\mu\in\La(p,r)$ and $1\leq j\leq p$, let $\ti\sH_{\mu,j}$ be the subalgebra of $\afHrC$ generated by $T_i$ and $X_{\mu_{[1,j-1]}+1},\cdots,X_{\mu_{[1,j]}}$ with $s_i\in\fS_{\mu^{(j)}}$. Since $\ti\sH_{\mu,j}\cong\sH_\vtg(\mu_j)_\mbc$ and $\OgC^{\ot\mu_j}$ is a right $\sH_\vtg(\mu_j)_\mbc$-module, $\OgC^{\ot\mu_j}$ can be regarded as a right $\ti\sH_{\mu,j}$-module.

For $\bfs=\{\sfs_1,\ldots,\sfs_p\}\in\mathscr S_{r,\mu}$, let
$\bfa=(\sfs_1,\ldots,\sfs_p)\in(\mbc^*)^r$
be the $r$-tuple obtained by juxtaposing the segments in $\bfs$.
For $1\leq j\leq p$ let $\frak I_{\mu,j}$ be the left ideal of $\ti\sH_{\mu,j}$ generated by $X_k-a_k$ for $\mu_{[1,j-1]}+1\leq k\leq\mu_{[1,j]}$. Let $\iota_j:\sH_{\mu,j}\ra\ti\sH_{\mu,j}/\frak I_{\mu,j}$ be the natural $\sH_{\mu,j}$-module isomorphism defined by sending $h$ to $\bar h$. Let
$$ \bar\sI_{\mu,j}=\iota_j(\sI_{\mu,j})=\sH_{\mu,j}\bar y_{\mu^{(j)}}
=\ti\sH_{\mu,j}\bar y_{\mu^{(j)}}.$$

By \ref{Lem2} we have the following corollary.
\begin{Coro}\label{vi}
Maintain the notation above.
There is a $\UglC$-module isomorphism $$\vi:(\OgC^{\ot\mu_1}\ot_{\ti\sH_{\mu,1}}\!\bar \sI_{\mu,1})\ot\cdots\ot(\OgC^{\ot\mu_p}\ot_{\ti\sH_{\mu,p}}\!\bar \sI_{\mu,p})\ra\sF(\bar \sI_\mu)$$ such that $\vi(w_1\ot\ol{h_1}\ot\cdots\ot w_p\ot\ol{h_p})=w_1\ot\cdots\ot w_p\ot\ol{h_1\cdots h_p}$ for $w_j\in\OgnC^{\ot\mu_j}$ and $h_j\in\sI_{\mu,j}$ with $1\leq j\leq p$.
\end{Coro}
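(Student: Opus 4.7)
The plan is to obtain $\vi$ by chasing both sides through a chain of $\UglC$-module isomorphisms that identify them with the common subspace $\OgnC^{\ot r}\sJ_\mu$ of $\OgnC^{\ot r}$, then to read off the explicit formula by tracking elementary tensors.

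First I would reduce the target. By \ref{Lem1} and associativity of tensor product, $\sF(\bar\sI_\mu) = \OgC^{\ot r}\ot_{\afHrC}\bar\sI_\mu \cong \OgnC^{\ot r}\ot_{\HrC}\bar\sI_\mu$. Since $\iota$ restricts to an $\HrC$-module isomorphism $\sI_\mu \to \bar\sI_\mu$, the right side is isomorphic to $\OgnC^{\ot r}\ot_{\HrC}\sI_\mu$, which by \ref{Lem3} and \ref{Rogawski} equals $\OgnC^{\ot r}\sI_\mu = \OgnC^{\ot r}\sJ_\mu$. Applying the same chain to each factor of the domain (with $r$ replaced by $\mu_j$, $\HrC$ by $\sH_{\mu,j}$, $\afHrC$ by $\ti\sH_{\mu,j}$, and $\iota$ by $\iota_j$) produces a $\UglC$-module isomorphism $\OgC^{\ot\mu_j}\ot_{\ti\sH_{\mu,j}}\bar\sI_{\mu,j} \xrightarrow{\sim} \OgnC^{\ot\mu_j}\sJ_{\mu,j}$ sending $w_j\ot\bar h_j \mapsto w_j h_j$. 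Taking the outer tensor product over $\mbc$ and invoking \ref{Lem2} identifies $\bigotimes_j \OgnC^{\ot\mu_j}\sJ_{\mu,j}$ with $\OgnC^{\ot r}\sJ_\mu$; composing with the inverse of the target reduction defines $\vi$.

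To verify the displayed formula, I would track $w_1\ot\bar h_1\ot\cdots\ot w_p\ot\bar h_p$ through this composition. Each factor goes to $w_j h_j\in\OgnC^{\ot\mu_j}\sJ_{\mu,j}$, and since $h_j \in \sH_{\mu,j}$ acts only on the tensor slots $\mu_{[1,j-1]}+1,\ldots,\mu_{[1,j]}$, the tensor of these elements coincides inside $\OgnC^{\ot r}\sJ_\mu$ with $(w_1\ot\cdots\ot w_p)\cdot(h_1 h_2\cdots h_p)$. Reversing the target reduction then sends this to $(w_1\ot\cdots\ot w_p)\ot\iota(h_1\cdots h_p) = (w_1\ot\cdots\ot w_p)\ot\overline{h_1\cdots h_p}$ in $\sF(\bar\sI_\mu)$, which is the desired formula.

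The main obstacle is the bookkeeping across the two sets of identifications: in particular, one must check that $h_1 h_2\cdots h_p$ actually belongs to $\sI_\mu$, so that $\iota(h_1\cdots h_p)$ lies in $\bar\sI_\mu$. This, however, is precisely the argument already used in the proof of \ref{Lem2} --- the $h_j$ commute pairwise and each lies in $\sJ_{\mu,j} = \sI_{\mu,j}$, so their product lies in $\bigcap_j \sJ_{\mu^{(j)}} = \sJ_\mu$. Everything else is formal, and $\UglC$-equivariance is automatic since every map in the chain is built from $\HrC$-linear maps tensored against $\OgnC^{\ot r}$ (or its tensor-factored restrictions), on which the $\UglC$-action commutes with the right $\HrC$-action.
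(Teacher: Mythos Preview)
Your proposal is correct and follows essentially the same approach as the paper: both sides are identified with $\OgnC^{\ot r}\sJ_\mu$ via \ref{Lem1}, \ref{Rogawski}, and \ref{Lem3}, and then \ref{Lem2} supplies the link. Your write-up is simply more explicit than the paper's one-sentence proof, in particular in tracking the formula on elementary tensors and in verifying that $h_1\cdots h_p\in\sJ_\mu$ (which the paper leaves implicit, having already observed it inside the proof of \ref{Lem2}).
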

\begin{proof}
Combining \ref{Lem1}, \ref{Rogawski} with \ref{Lem3} yields $\sF(\bar\sI_\mu)\cong\OgnC^{\ot r}\ot_{\HrC}\bar\sJ_\mu\cong\OgnC^{\ot r}\sJ_\mu$
and $\OgC^{\ot\mu_j}\ot_{\ti\sH_{\mu,j}}\bar\sI_{\mu,j}
\cong\OgnC^{\ot^{\mu_j}}\ot_{\sH_{\mu,j}}\bar\sJ_{\mu,j}
\cong\OgnC^{\ot^{\mu_j}} \sJ_{\mu,j}$ for $1\leq j\leq p$. This, together with \ref{Lem2}, implies the assertion.
\end{proof}

We now prove that $\vi$ is in fact a $\afUglC$-module isomorphism.
\begin{Lem}\label{isomorphism vi}
The map $\vi$ is a $\afUglC$-module homomorphism.
\end{Lem}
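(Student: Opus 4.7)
The plan is to reduce the problem to checking compatibility only with the extra generators of $\afUglC\cong\DC(n)$ not already in $\UglC$. By Corollary \ref{vi} the map $\vi$ is $\UglC$-equivariant, and $\DC(n)$ is generated as an algebra by $\UglC$ together with the central elements $\sfz_s^\pm$ for $s\in\mbz^+$. Thus it suffices to verify that $\vi$ commutes with the action of each $\sfz_s^\pm$.

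The key observation is that the generators $\sfz_s^\pm$ are \emph{primitive} in the Hopf algebra structure of $\DC(n)$, namely $\Delta(\sfz_s^\pm)=\sfz_s^\pm\otimes 1+1\otimes \sfz_s^\pm$. Iterating coassociatively, the action of $\sfz_s^\pm$ on any $m$-fold tensor product of $\DC(n)$-modules is simply the sum of its actions on each individual tensor factor. Combined with the explicit formula \eqref{QGKMAlg-action}, which gives $\sfz_s^-\cdot\og_i=\og_{i+sn}$ and $\sfz_s^+\cdot\og_i=\og_{i-sn}$, this shows that on $\OgC^{\otimes r}$ the action is
\[
\sfz_s^{\pm}\cdot(\og_{i_1}\ot\cdots\ot\og_{i_r})=\sum_{k=1}^{r}\og_{i_1}\ot\cdots\ot\og_{i_k\mp sn}\ot\cdots\ot\og_{i_r}.
\]
The same formula splits naturally along any partition $r=\mu_1+\cdots+\mu_p$ of the tensor positions, by coassociativity of $\Delta$.

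The verification then proceeds as follows. First I would compute the action of $\sfz_s^{\pm}$ on a typical element $w_1\ot\bar h_1\ot\cdots\ot w_p\ot\bar h_p$ of the left-hand side: the outer $p$-fold tensor product uses $\Delta^{p-1}(\sfz_s^{\pm})$, which by primitivity acts as $\sum_j 1\ot\cdots\ot\sfz_s^{\pm}\ot\cdots\ot 1$; within each factor $\OgC^{\ot\mu_j}\ot_{\ti\sH_{\mu,j}}\!\bar \sI_{\mu,j}$, the action descends to $\OgC^{\ot\mu_j}$ because, by \cite[3.5.5]{DDF}, the $\DC(n)$- and $\afHrC$-actions on $\OgC^{\ot r}$ commute, hence so do the $\DC(n)$- and $\ti\sH_{\mu,j}$-actions on $\OgC^{\ot\mu_j}$. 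Applying $\vi$ then yields a sum of terms in which only one $\og$-index is shifted and the tail $\overline{h_1\cdots h_p}$ is untouched. Second, I would compute directly the action of $\sfz_s^\pm$ on the image $w_1\ot\cdots\ot w_p\ot\overline{h_1\cdots h_p}$ on the right-hand side: again by primitivity on $\OgC^{\ot r}$, this is the same sum of single-site shifts tensored with $\overline{h_1\cdots h_p}$. The two expressions match term by term.

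The principal technical point, and the one place most care is needed, is that the $p$-fold outer coproduct decomposition of $\sfz_s^\pm$ used on the LHS must agree with the full $r$-fold inner coproduct decomposition used on the RHS; this is precisely coassociativity of $\Delta$ together with primitivity of $\sfz_s^\pm$. Once that is in place, nothing else is moving: the $\UglC$-case is handled by Corollary \ref{vi}, the $\sfz_s^\pm$-case reduces to a book-keeping check of primitive actions, and together they exhaust a generating set of $\afUglC$.
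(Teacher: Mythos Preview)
Your reduction step is incorrect. You assert that $\DC(n)$ is generated by $\UglC$ together with the central elements $\sfz_s^\pm$, but by the paper's definition $\UglC$ is generated only by $E_i,F_i$ for $1\le i\le n-1$ and all $K_j^{\pm1}$; it does \emph{not} contain $E_n$ or $F_n$. Since the $\sfz_s^\pm$ are central (relations (QGL6)--(QGL8)), the subalgebra generated by $\UglC$ and the $\sfz_s^\pm$ is just $\UglC\otimes\mbc[\sfz_s^\pm]$, a proper subalgebra of $\DC(n)$. So checking only $\UglC$ and $\sfz_s^\pm$ does not suffice.

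Moreover, the missing generators $E_n,F_n$ are precisely the ones for which your shortcut breaks down. They are not primitive but skew-primitive ($\Delta(E_n)=E_n\otimes\ti K_n+1\otimes E_n$), and their action does not preserve $\OgnC^{\ot\mu_j}$: for instance $E_n\cdot\og_1=\og_0\notin\OgnC$. Hence any check for $E_n,F_n$ forces you to track how elements of $\OgC^{\ot\mu_j}$ are rewritten as $\OgnC^{\ot\mu_j}\cdot\ti\sH_{\mu,j}$ via Lemma~\ref{Lem1}, and how the resulting Hecke pieces interact with $\bar h_j$. That is exactly the computation the paper carries out uniformly for an arbitrary $u\in\afUglC$: write $u_iw_i=\sum w_{i,k_i}g_{i,k_i}$ with $w_{i,k_i}\in\OgnC^{\ot\mu_i}$ and $g_{i,k_i}\in\ti\sH_{\mu,i}$, then compare both sides. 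Once you are forced into this for $E_n,F_n$, there is no saving over the paper's direct argument.
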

\begin{proof}
Let $u\in\afUglC$ and $w=w_1\ot\ol{h_1}\ot\cdots\ot w_p\ot\ol{h_p}\in
(\OgC^{\ot\mu_1}\ot_{\ti\sH_{\mu,1}}\!\bar \sI_{\mu,1})\ot\cdots\ot(\OgC^{\ot\mu_p}\ot_{\ti\sH_{\mu,p}}\!\bar \sI_{\mu,p})$, where $w_i\in\OgnC^{\ot\mu_i}$ and $h_i\in\sI_{\mu,i}$ for $1\leq i\leq p$.
Assume $\Delta^{(p-1)}(u)=\sum_{(u)}u_1\ot\cdots\ot u_p$, $u_iw_i=\sum_{k_i}w_{i,k_i}g_{i,k_i}$ and $g_{i,k_i}h_i=\sum_{j_i}g_{i,k_i,j_i}X_{j_i}$, where $w_{i,k_i}\in\OgnC^{\ot\mu_i}$, $g_{i,k_i}\in\ti\sH_{\mu,i}$, and $g_{i,k_i,j_i}\in\sH_{\mu,i}$, $X_{j_i}\in\ti\sH_{\mu,i}$. Then $$g_{i,k_i}(\iota_i(h_i))=g_{i,k_i}\ol{h_i}=\sum_{j_i}a_{j_i}
\ol{g_{i,k_i,j_i}}.$$
Hence,
\begin{equation*}
\begin{split}
uw&
= \sum_{(u)}u_1w_1\ot\ol{h_1}\ot\cdots\ot u_pw_p\ot\ol{h_p}
\\
&= \sum_{(u)}\sum_{k_1,\cdots,k_p}w_{1,k_1}\ot g_{1,k_1}\ol{h_1}\ot\cdots\ot w_{p,k_p}\ot g_{p,k_p}\ol{h_p} \\
&= \sum_{(u)}\sum_{k_1,\cdots,k_p\atop j_1,\cdots,j_p}a_{j_1}\cdots a_{j_p}w_{1,k_1}\ot \ol{g_{1,k_1,j_1}}\ot\cdots\ot w_{p,k_p}\ot\ol{g_{p,k_p,j_p}}.
\end{split}
\end{equation*}
Since $g_{1,k_1}\cdots g_{p,k_p}\ol{h_1\cdots h_p} =\ol{
g_{1,k_1}h_1\cdots g_{p,k_p} h_p}=\sum_{j_1,\cdots,j_p}a_{j_1}\cdots a_{j_p}\ol{g_{1,k_1,j_1}\cdots g_{p,k_p,j_p}}$,
we conclude that
\begin{equation*}
\begin{split}
\vi(uw)&=\sum_{(u)}\sum_{k_1,\cdots,k_p\atop j_1,\cdots,j_p}a_{j_1}\cdots a_{j_p}w_{1,k_1}\ot  \cdots\ot w_{p,k_p}\ot\ol{g_{1,k_1,j_1}\cdots g_{p,k_p,j_p}}\\
&  = \sum_{(u)}\sum_{k_1,\cdots,k_p}w_{1,k_1}\ot\cdots\ot w_{p,k_p}\ot g_{1,k_1}\cdots g_{p,k_p}\ol{h_1\cdots h_p}\\
& = \sum_{(u)}u_1w_{1}\ot\cdots\ot u_pw_{p}\ot \ol{h_1\cdots h_p}\\
 &=u(w_1\ot\cdots\ot w_p\ot\ol{h_1\cdots h_p})\\
&=u\vi(w).
\end{split}
\end{equation*}
The proof is completed.
\end{proof}

We can now describe $\sF(\bar\sI_{\wp(\bfs)})$ as follows.
\begin{Prop}\label{standard module}
Let $\bfs=\{\sfs_1,\ldots,\sfs_p\}\in\ms S_{r,\mu}$. Then  $\sF(\bar\sI_\mu)=0$ for $\bfs\not\in\ms S_{r}^{(n)}$ and
$\sF(\bar \sI_\mu)\cong L(\bfQ_1)\ot\cdots\ot L(\bfQ_p)$ for $\bfs\in\ms S_{r}^{(n)}$, where $\bfQ_i=(Q_{i,1}(u),\cdots,Q_{i,n}(u))$ with $Q_{i,n}(u)=(1-a_i\ttv^{-n+1}u)^{\dt_{\mu_i,n}}$ and
$\frac{Q_{i,j} (u\ttv^{j-1})} {Q_{i,j+1} (u\ttv^{j+1})}=(1-a_iu)^{\dt_{j,\mu_i}}$ for $1\leq i\leq p$ and $1\leq j\leq n-1$.
\end{Prop}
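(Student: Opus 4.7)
The plan is to leverage the $\afUglC$-module factorization already provided by \ref{vi} and \ref{isomorphism vi},
\[
\sF(\bar\sI_\mu)\cong\bigotimes_{j=1}^p\bigl(\OgC^{\ot\mu_j}\ot_{\ti\sH_{\mu,j}}\bar\sI_{\mu,j}\bigr),
\]
and then to reduce the whole proposition to understanding each tensor factor individually as an $\afUglC$-module.

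For a fixed $j$, the right action of $\ti\sH_{\mu,j}$ on the corresponding tensor factor $\OgC^{\ot\mu_j}$ (inherited from the action of $\afHrC$ on $\OgC^{\ot r}$ restricted to the $j$-th block of tensor positions) is, under the algebra isomorphism $\ti\sH_{\mu,j}\cong\sH_\vtg(\mu_j)_\mbc$, exactly the action defining the Schur functor $\sF_{n,\mu_j}$; similarly, $\bar\sI_{\mu,j}=\ti\sH_{\mu,j}\bar y_{\mu^{(j)}}$ corresponds to the single-segment signed-permutation module $\bar\sI_{(\mu_j)}$. Thus the $j$-th factor identifies with $\sF_{n,\mu_j}(\bar\sI_{(\mu_j)})$. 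When $\mu_j\leq n$, I would apply \ref{n geq r} with $r$ replaced by $\mu_j$ and $a$ by the center $a_j$ of $\sfs_j$: it gives $V_{\sfs_j}=\bar\sI_{(\mu_j)}$ and $\sF_{n,\mu_j}(V_{\sfs_j})\cong L(\bfQ_j)$ with exactly the Drinfeld polynomials displayed in the proposition. Tensoring over $j=1,\ldots,p$ then yields $\sF(\bar\sI_\mu)\cong L(\bfQ_1)\ot\cdots\ot L(\bfQ_p)$ whenever $\bfs\in\ms S_r^{(n)}$.

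When $\mu_j>n$ for some $j$ (equivalently $\bfs\not\in\ms S_r^{(n)}$), the plan is to show the $j$-th factor vanishes, forcing the whole product to vanish. By running the argument from the proof of \ref{vi} inside the $j$-th block (that is, \ref{Lem1} combined with \ref{Lem3} applied locally), the factor is isomorphic to $\OgnC^{\ot\mu_j}\sJ_{\mu,j}$; by \ref{Rogawski} this equals $\OgnC^{\ot\mu_j}y_{\mu^{(j)}}$, which is (up to scalar) the image of the Hecke $q$-antisymmetrizer of rank $\mu_j$ acting on the $n$-dimensional space $\OgnC$, and hence is zero since $\mu_j>n=\dim\OgnC$.

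The main obstacle is checking that the $\afUglC$-action on each tensor factor $\OgC^{\ot\mu_j}\ot_{\ti\sH_{\mu,j}}\bar\sI_{\mu,j}$ inherited from $\sF(\bar\sI_\mu)$ via $\vi$ really agrees with the $\afUglC$-action coming from $\zeta_{\mu_j,\mbc}:\afUglC\to{\mathcal S}_\vtg(n,\mu_j)_\mbc$ used by Lemma \ref{n geq r} to identify the factor with $L(\bfQ_j)$. This compatibility is precisely what Lemma \ref{isomorphism vi} provides: $\vi$ intertwines the coproduct $\Delta^{(p-1)}$ of $\afUglC$ with the diagonal action on the local factors. Once that is in place, the rest of the argument is just bookkeeping of the Drinfeld polynomials read off from Lemma \ref{n geq r}.
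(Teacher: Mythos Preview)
Your proof is correct and follows essentially the same route as the paper: factorize $\sF(\bar\sI_\mu)$ via \ref{vi} and \ref{isomorphism vi} into $\bigotimes_j \sF_{n,\mu_j}(\bar\sI_{(\mu_j)})$, then identify each nonzero factor with $L(\bfQ_j)$ using \ref{n geq r}. The only minor difference is in the vanishing case: the paper simply cites \ref{classification 1} to get $\sF_{n,\mu_k}(V_{\sfs_k})=0$ when $\mu_k>n$, whereas you argue directly that the $q$-antisymmetrizer of rank $\mu_j$ annihilates $\OgnC^{\ot\mu_j}$ for $\mu_j>n$; both are valid and equally short.
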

\begin{proof}
Since $\bar\sI_{\mu_i}\cong V_{\sfs_i}$ for $1\leq i\leq p$, by \ref{vi} and \ref{isomorphism vi} we conclude that
$\sF(\bar\sI_\mu)=\sF_{n,r}(\bar\sI_\mu)\cong\sF_{n,\mu_1}(V_{\sfs_1})
\ot\cdots\ot\sF_{n,\mu_p}(V_{\sfs_p})$. If $\bfs\not\in\ms S_{r}^{(n)}$, then there exist $1\leq k\leq p$ such that $|\sfs_k|=\mu_k>n$. By \ref{classification 1} we have $\sF_{n,\mu_k}(V_{\sfs_k})=0$ and hence $\sF(\bar\sI_\mu)=0$. If $\bfs \in\ms S_{r}^{(n)}$, then by \ref{n geq r} we have  $\sF_{n,\mu_i}(V_{\sfs_i})\cong L(\bfQ_i)$ for $1\leq i\leq p$. Consequently, $\sF(\bar\sI_\mu)\cong  L(\bfQ_1)\ot\cdots\ot L(\bfQ_p)$.
\end{proof}

We now turn to studying $\sF(V_\bfs)$ for $\bfs\in\ms S_r^{(n)}$. To compute $\sF(V_\bfs)$, we need to generalize \cite[7.2]{CP96} to the case of $n\leq r$.
Recall the notation $\Lanr$ defined in \eqref{Lanr}.
Let $\La^+(n,r)=\La(n,r)\cap\La^+(r)$. For $\la\in\mbnn$ let $L(\la)$ be the irreducible $\UglC$-module with highest weight $\la$. For $1\leq i\le n$, let $\bffkk_i=\zeta_r(K_i)$ and
$$\bigg[ {\bffkk_i;0 \atop t} \bigg] =
\prod_{s=1}^t \frac
{\bffkk_i\ttv^{-s+1}-\bffkk_i^{-1}\ttv^{s-1}}{\ttv^s-\ttv^{-s}}.$$
For $\mu\in\mbnn$ let $\bffkk_\mu=\big[{\bffkk_1;0\atop \mu_1}\big]
\cdots\big[{\bffkk_n;0\atop \mu_n}\big]$. The following result
is the generalization of \cite[7.2]{CP96}.

\begin{Lem}\label{finite case}
Let $\mu\in\La^+(r)$. Then $\OgnC^{\ot r}\ot_{\HrC}E_\mu\not=0$ if and only if $\mu'\in\La(n,r)$, where $\mu'$ is the dual partition of $\mu$. Furthermore if $\mu'\in\La^+(n,r)$, then $\OgnC^{\ot r}\ot_{\HrC} E_\mu\cong L(\mu')$.
\end{Lem}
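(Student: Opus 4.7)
The plan is to apply the exact functor $\OgnC^{\ot r}\ot_{\HrC}-$ to the Kazhdan--Lusztig decomposition \eqref{signed permutation module} and isolate the summand $\OgnC^{\ot r}\ot_{\HrC}E_\mu$ by induction on the dominance order. Since $\ttv$ is not a root of unity, $\HrC$ is semisimple, so Lemma~\ref{Lem3} applied to \eqref{signed permutation module} gives
\[
\OgnC^{\ot r}y_\mu\cong\bigl(\OgnC^{\ot r}\ot_{\HrC}E_\mu\bigr)\oplus\bigoplus_{\nu\rhd\mu}m_{\nu,\mu}\bigl(\OgnC^{\ot r}\ot_{\HrC}E_\nu\bigr).
\]
It remains to compute the left-hand side by another route and then match multiplicities.

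For the direct computation I would factor $y_\mu=y_{\mu^{(1)}}\cdots y_{\mu^{(p)}}$ (the factors commute by disjoint supports) and recognise each $\OgnC^{\ot\mu_j}y_{\mu^{(j)}}$ as the quantum $\mu_j$-th exterior power of $\OgnC$. This yields the $\UglC$-module identification
\[
\OgnC^{\ot r}y_\mu\cong\La^{\mu_1}\OgnC\ot\cdots\ot\La^{\mu_p}\OgnC,
\]
which vanishes iff some $\mu_j>n$, equivalently iff $\mu'\notin\La(n,r)$; this already gives the ``only if'' direction of the first claim. When $\mu'\in\La^+(n,r)$, the formal character equals $e_\mu(x_1,\ldots,x_n)$, and the classical expansion $e_\mu=\sum_\la K_{\la',\mu}s_\la$ combined with semisimplicity of $\SrC$ forces
\[
\OgnC^{\ot r}y_\mu\cong L(\mu')\oplus\bigoplus_{\la\in\La^+(n,r),\,\la\lhd\mu'}K_{\la',\mu}L(\la),
\]
in which $L(\mu')$ appears with multiplicity $K_{\mu,\mu}=1$.

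The final step is induction on $\mu$ in the dominance order, starting from the maximum $\mu=(r)$, where $\HrC y_{(r)}=E_{(r)}$ and $\La^r\OgnC=L((1^r))$ or $0$ make the base case immediate. For the inductive step, if $\mu'\notin\La(n,r)$ then the first displayed isomorphism collapses to zero and forces $\OgnC^{\ot r}\ot_{\HrC}E_\mu=0$; if $\mu'\in\La^+(n,r)$, the hypothesis identifies $\OgnC^{\ot r}\ot_{\HrC}E_\nu$ as $L(\nu')$ or $0$ for each $\nu\rhd\mu$ according as $\nu'\in\La^+(n,r)$ or not, and substituting into the first displayed isomorphism and reindexing the second by $\la=\nu'$ (so $\la\lhd\mu'\Leftrightarrow\nu\rhd\mu$) lets one cancel composition factors pairwise and pin $\OgnC^{\ot r}\ot_{\HrC}E_\mu$ down to $L(\mu')$. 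The cancellation depends on the identity $m_{\nu,\mu}=K_{\nu,\mu}$, which I expect will be the main obstacle: it follows from the classical decomposition $\HrC y_\mu\cong\bigoplus_\la K_{\la,\mu}S^{\la'}$ of the signed permutation module and the identification $E_\la\cong S^{\la'}$ forced by the multiplicity-one appearance of $E_\mu$ in \eqref{signed permutation module}, but it will need to be established carefully in the present Kazhdan--Lusztig/quantum framework.
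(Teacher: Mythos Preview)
Your argument is sound in outline and can be made to work, but it takes a genuinely different route from the paper's. The paper does not compute $\OgnC^{\ot r}y_\mu$ directly at all; instead it chooses $N>\max\{n,r\}$, invokes the already-known case $N>r$ (namely \cite[7.2]{CP96}) to get $\Og_{N,\mbc}^{\ot r}\ot_{\HrC}E_\mu\cong L(\mu')$ as an $\sS(N,r)_\mbc$-module, and then applies the idempotent $e=\sum_{\la\in\Lanr}\bffkk_\la$ together with Green's fact \cite[(6.5f)]{Gr80} that $eL(\mu')\neq0$ iff $\mu'\in\La(n,r)$, in which case $eL(\mu')\cong L(\mu')$. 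The identification $\OgnC^{\ot r}\ot_{\HrC}E_\mu\cong e\bigl(\Og_{N,\mbc}^{\ot r}\ot_{\HrC}E_\mu\bigr)$ comes from \cite[4.3.3]{DDF}. So the paper's proof is a three-line reduction to the large-rank case plus a classical Schur-functor fact.

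Your approach trades this reduction for a self-contained computation: identify $\OgnC^{\ot r}y_\mu$ with a tensor product of quantum exterior powers, decompose via $e_\mu=\sum_\la K_{\la',\mu}s_\la$, and then cancel against \eqref{signed permutation module} by downward induction on dominance. The benefit is that you never appeal to \cite[7.2]{CP96} or to the $N>r$ situation; the cost is the bookkeeping and, as you correctly flag, the need to pin down $m_{\nu,\mu}=K_{\nu,\mu}$. That identity does hold (in the semisimple regime $E_\nu$ is the Specht module $S^{\nu'}$, and the signed permutation module decomposes as $\bigoplus_\nu K_{\nu,\mu}S^{\nu'}$), so there is no real gap---but note that the paper's route avoids this issue entirely.
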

\begin{proof}
We choose $N$ such that $N>\max\{n,r\}$. Let $e=\sum_{\mu\in\La(n,r)}\bffkk_\mu\in\sS(N,r)_\mbc$. It is well known
that for $\mu\in\La^+(N,r)$, $eL(\mu)\not=0$ if and only if $\mu\in\La(n,r)$ (cf. \cite[6.5(f)]{Gr80}). Furthermore by \cite[4.3.3]{DDF} and \cite[7.2]{CP96} we have $\OgnC^{\ot r}\ot_{\HrC}E_\mu\cong e\big(
\Og_{N,\mbc}^{\ot r}\ot_{\HrC}E_\mu\big)\cong e(L(\mu'))$. Thus $\OgnC^{\ot r}\ot_{\HrC}E_\mu\not=0$ if and only if $\mu'\in\La(n,r)$. If $\mu'\in\La^+(n,r)$, then $\OgnC^{\ot r}\ot_{\HrC}E_\mu \cong e(L(\mu'))\cong L(\mu')$.
\end{proof}

In the case of $n>r$, the Drinfeld polynomials associated with $\sF(V_\bfs)$ were calculated for $\bfs\in\ms S_r^{(n)}$ in \cite[7.6]{CP96} and \cite[4.4.2]{DDF}. We are now prepared to
use \ref{standard module} and \ref{finite case} to generalize [loc. cit.] to the case of $n\leq r$ in \ref{main theorem}.

Let $\sQ(n)_r=\{\bfQ\in\sQ(n)\mid\sum_{1\leq i\leq n}\deg Q_i(u)=r\}$.
For $\bfs=\{\sfs_1,\ldots,\sfs_p\}\in\ms S_r^{(n)}$ with
$$\sfs_i=(a_i\ttv^{-\mu_i+1},a_i\ttv^{-\mu_i+3},\ldots,a_i\ttv^{\mu_i-1})\in(\mbc^*)^{\mu_i},$$
define $\bfQ_\bfs=(Q_1(u),\ldots,Q_n(u))$ by setting  $Q_n(u)=\prod_{1\leq i\leq p\atop\mu_i=n}(1-a_iu\ttv^{-n+1})$ and
$$Q_i(u)=
P_i(u\ttv^{-i+1})P_{i+1}(u\ttv^{-i+2})\cdots
P_{n-1}(u\ttv^{n-2i})Q_n(u\ttv^{2(n-i)})$$ for $1\leq i\leq n-1$, where
$$P_{i}(u)=\prod_{1\leq j\leq p\atop \mu_j=i}(1-a_ju).$$
Then $$\sum_{1\leq i\leq n}\deg Q_i(u)=n\deg Q_n(u)+\sum_{1\leq i\leq n-1}i\deg P_i(u)=\sum_{1\leq i\leq p}\mu_i=r.$$
So $\bfQ_\bfs\in\sQ(n)_r$.  Consequently, we obtain a map
$\pa_{n,r}:\ms S_r^{(n)}\ra\sQ(n)_r$ defined by sending $\bfs$ to $\bfQ_\bfs$.

\begin{Lem}\label{bijective map}
The map
$\pa_{n,r}:\ms S_r^{(n)}\ra\sQ(n)_r$ is bijective.
\end{Lem}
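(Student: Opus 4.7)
The plan is to exhibit an explicit two-sided inverse, obtained by reading off the segments from ratios of consecutive Drinfeld polynomials. Given $\bfQ = (Q_1, \ldots, Q_n) \in \sQ(n)_r$, I would define
$$
P_i(u) = \frac{Q_i(\ttv^{i-1}u)}{Q_{i+1}(\ttv^{i+1}u)} \quad (1 \leq i \leq n-1), \qquad P_n(u) = Q_n(\ttv^{n-1}u).
$$
By the defining dominance condition of $\sQ(n)$, each $P_i$ is a polynomial, and by inspection each has constant term $1$. Hence I can factor uniquely $P_i(u) = \prod_{j=1}^{d_i}(1-b_{i,j}u)$ with $b_{i,j}\in\mbc^*$, and to each root $b_{i,j}$ I associate a segment of length $i$ with center $b_{i,j}$. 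Let $\bfs$ be the unordered collection of all these segments; since $d_i = \deg P_i$ and the lengths satisfy $i \leq n$, one has $\bfs \in \ms S_r^{(n)}$ provided $|\bfs| = r$.

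To verify the degree count and simultaneously the inversion identities, the central computation is the following: writing the defining formula of the statement as
$$
Q_i(u) = \prod_{k=i}^{n-1} P_k(u\ttv^{k-2i+1}) \cdot Q_n(u\ttv^{2(n-i)}),
$$
the substitutions $u \mapsto \ttv^{i-1}u$ and $u \mapsto \ttv^{i+1}u$ produce
$$
Q_i(\ttv^{i-1}u) = \prod_{k=i}^{n-1}P_k(u\ttv^{k-i})\cdot Q_n(u\ttv^{2n-i-1}), \qquad Q_{i+1}(\ttv^{i+1}u) = \prod_{k=i+1}^{n-1}P_k(u\ttv^{k-i})\cdot Q_n(u\ttv^{2n-i-1}),
$$
so all factors telescope to leave the ratio equal to $P_i(u)$ for $1 \leq i \leq n-1$. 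Similarly, $Q_n(\ttv^{n-1}u) = P_n(u)$ is immediate from the formula $Q_n(u) = \prod_{\mu_i=n}(1-a_iu\ttv^{-n+1})$. This computation simultaneously shows $\pa_{n,r}$ is well-defined (the ratios are polynomials, so $\bfQ_\bfs$ is dominant) and identifies the $P_i$ extracted from $\bfQ_\bfs$ as the original grouping polynomials, which establishes that the two constructions are mutually inverse.

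The only remaining check is that $|\bfs| = r$ for any $\bfQ \in \sQ(n)_r$. The telescoping gives $\deg Q_i - \deg Q_{i+1} = d_i$ for $i < n$ and $\deg Q_n = d_n$, so $\deg Q_i = \sum_{k \geq i} d_k$ and summation yields $r = \sum_{i=1}^n \deg Q_i = \sum_{k=1}^n k\, d_k = |\bfs|$. The converse direction (that $\pa_{n,r}$ of the reconstructed $\bfs$ returns $\bfQ$) follows because the explicit formula recovers each $Q_i$ from $P_i, P_{i+1}, \ldots, P_n$ by the same product expression used in the definition. I do not expect any genuine obstacle in this argument; the only delicate point is the bookkeeping of $\ttv$-powers in the telescoping, which collapses cleanly once one writes the product in the indexed form above.
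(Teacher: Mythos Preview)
Your proposal is correct and takes essentially the same approach as the paper: both construct the inverse by forming the ratios $P_j(u)=Q_j(\ttv^{j-1}u)/Q_{j+1}(\ttv^{j+1}u)$ (together with $Q_n$), factoring into linear factors, and reading off segments of the appropriate lengths. The paper is terser---it simply asserts injectivity is clear and that $\pa_{n,r}(\bfs)=\bfQ$ is easy to see---whereas you carry out the telescoping verification and the degree count explicitly; but the underlying argument is identical.
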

\begin{proof}
It is clear that $\pa_{n,r}$ is injective.
Let $\bfQ=(Q_1(u),\ldots,Q_n(u))\in\sQ(n)_r$ and let $\la\in\La(n,r)$, with $\la_i=\mathrm{deg}
Q_i(u)$. For $1\leq
j\leq n-1$ let
$$
P_j (u) =\frac{Q_j (u\ttv^{j-1})} {Q_{j+1} (u\ttv^{j+1})}
$$
 and $\nu_j={\rm deg\,}
P_j(u)=\la_j-\la_{j+1}$.
We write, for $1\leq i\leq n-1$,
$$P_i(u)=(1-a_{\nu_1 +\cdots+\nu_{i-1}+1}u)(1-a_{\nu_1 +\cdots+\nu_{i-1}+2}u)
\cdots(1-a_{\nu_1 +\cdots+\nu_{i-1}+\nu_i}u),$$
and $Q_n(u)=(1-b_1u)\cdots(1-b_{\la_n}u)$. Let $p'=\sum_{1\leq i\leq n-1}\nu_i$ and $p=p'+\la_n$.
Let $\bfs=\{\sfs_1,\ldots,\sfs_p\}$, where
$$\sfs_i=
\begin{cases}
(a_i\ttv^{-\mu_i+1},a_i\ttv^{-\mu_i+3},\ldots,a_i\ttv^{\mu_i-1})&\text{for
$1\leq i\leq p'$}\\
(b_{i-p'},b_{i-p'}\ttv^2,\cdots,b_{i-p'}\ttv^{2(n-1)})&\text{for $p'+1\leq i\leq p$}
\end{cases}
$$
and $(\mu_1,\ldots,\mu_{p'})=(1^{\nu_1},\ldots,(n-1)^{\nu_{n-1}})$. Since
$$\sum_{1\leq i\leq p}|\sfs_i|=\sum_{1\leq j\leq p'}\mu_j+n\la_n=\sum_{1\leq i\leq n-1}i\nu_i+n\la_n=\sum_{1\leq i\leq n}\la_i=r,$$
we have $\bfs\in\ms S_r^{(n)}$. It is easy to see that $\pa_{n,r}(\bfs)=\bfQ$. Thus $\pa_{n,r}$ is surjective.
\end{proof}

\begin{Thm}\label{main theorem}
For $\bfs=\{\sfs_1,\ldots,\sfs_p\}\in\ms S_r^{(n)}$ with
$\sfs_i=(a_i\ttv^{-\mu_i+1},a_i\ttv^{-\mu_i+3},\ldots,a_i\ttv^{\mu_i-1}),$
we have $\sF(V_\bfs)\cong L(\bfQ_\bfs)$, where $\bfQ_\bfs=\pa_{n,r}(\bfs)$. In particular we have $\sF(V_\bfs)|_{\afUslC}\cong\bar L(\bfP)$, where
$$P_{i}(u)=\prod_{1\leq j\leq p\atop \mu_j=i}(1-a_ju).$$
for $1\leq
i\leq n-1$.
\end{Thm}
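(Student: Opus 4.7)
The plan is to invoke Proposition \ref{standard module} to reduce to a tensor product of single-segment modules, locate the canonical pseudo-highest weight vector there, and show it descends nontrivially to the simple subquotient $\sF(V_\bfs)$. Explicitly: by Proposition \ref{standard module}, $\sF(\bar\sI_\mu)\cong L(\bfQ_1)\ot\cdots\ot L(\bfQ_p)$ with $\mu=\wp(\bfs)$, and the $\bfQ_i$ come from the single segments $\sfs_i$ via Lemma \ref{n geq r}. Let $v_i\in L(\bfQ_i)$ be the pseudo-highest weight vector. Using the Hopf algebra structure of $\DC(n)\cong\afUglC$ (Lemmas \ref{presentation dHallAlg}, \ref{DDFIsoThm}) and the standard multiplicativity of Drinfeld polynomials on highest-weight vectors of tensor products, $v:=v_1\ot\cdots\ot v_p$ is a pseudo-highest weight vector of $\sF(\bar\sI_\mu)$ whose Drinfeld polynomial is the componentwise product of the $\bfQ_i$; a direct comparison with the formula defining $\pa_{n,r}$ identifies it with $\bfQ_\bfs$. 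A weight count further shows the $\UglC$-weight of $v$ equals $\mu'$ and spans the one-dimensional $\mu'$-weight space of $\sF(\bar\sI_\mu)$, since each $L(\bfQ_i)$ has $\UglC$-highest weight $\omega_{\mu_i}$ and the weights add.

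Next I would show the image of $v$ in the subquotient $\sF(V_\bfs)$ is nonzero. The functor $\OgnC^{\ot r}\ot_{\HrC}-$ coincides with $\sF$ on $\afHrC$-modules via Lemma \ref{Lem1}, and is exact by Lemma \ref{Lem3} (using the semisimplicity of $\HrC$). Applying it to the $\HrC$-decomposition \eqref{signed permutation module} of $\bar\sI_\mu|_{\HrC}$ and invoking Lemma \ref{finite case} together with $\mu'\in\La^+(n,r)$ (forced by $\mu_i\le n$ when $\bfs\in\ms S_r^{(n)}$), one obtains a $\UglC$-submodule $L(\mu')\subset\sF(\bar\sI_\mu)$ arising precisely from the $E_\mu$-summand of $\bar\sI_\mu|_{\HrC}$. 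By the defining property of $V_\bfs$, $E_\mu$ occurs with positive multiplicity in only the composition factor $V_\bfs$; thus, choosing an $\afHrC$-composition series of $\bar\sI_\mu$ and pushing through $\sF$ places the $L(\mu')$-summand inside the subquotient $\sF(V_\bfs)$. Since $v$ spans the $\mu'$-weight space of $\sF(\bar\sI_\mu)$, its image in $\sF(V_\bfs)$ is necessarily nonzero.

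This image is then a pseudo-highest weight vector of $\sF(V_\bfs)$ with Drinfeld polynomial $\bfQ_\bfs$. Because $\sF(V_\bfs)$ is an irreducible $\afSrC$-module by Lemma \ref{classification 1}, hence an irreducible $\afUglC$-module via $\zrC$, the uniqueness part of Theorem \ref{classification of simple afUglC-modules} forces $\sF(V_\bfs)\cong L(\bfQ_\bfs)$. The supplementary claim $\sF(V_\bfs)|_{\afUslC}\cong\bar L(\bfP)$ with $P_i(u)=\prod_{\mu_j=i}(1-a_ju)$ is then immediate from the definition of $\bar L(\bfP)$ together with the identity $P_i(u)=Q_i(u\ttv^{i-1})/Q_{i+1}(u\ttv^{i+1})$ that is built into the construction of $\bfQ_\bfs$.

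The main obstacle I anticipate is the descent step in the second paragraph: verifying that $v$ survives passage from $\sF(\bar\sI_\mu)$ to $\sF(V_\bfs)$ rather than being absorbed into a lower composition factor. This requires simultaneously the semisimplicity of $\HrC$ (which splits each stage of the $\HrC$-filtration), the exactness of $\OgnC^{\ot r}\ot_{\HrC}-$, and the uniqueness clause defining $V_\bfs$. By contrast, the computation of the Drinfeld polynomial of $v$ via the multiplicativity statement and its matching with $\pa_{n,r}(\bfs)$ are routine product manipulations, ultimately amounting to checking the formulas defining $\pa_{n,r}$ against the factored form of $Q_i(u)$ in terms of the $P_j(u)$ and $Q_n(u)$.
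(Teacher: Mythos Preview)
Your proposal is correct and follows essentially the same route as the paper. Both arguments use Proposition~\ref{standard module} to identify $\sF(\bar\sI_\mu)$ with the tensor product $L(\bfQ_1)\ot\cdots\ot L(\bfQ_p)$, take the tensor product $w$ of the pseudo-highest weight vectors, compute its Drinfeld polynomial as $\bfQ_\bfs$ via multiplicativity (the paper cites \cite[6.3]{CP96} and \cite[4.1]{FM} for this), and then use a one-dimensionality count on the $\mu'$-weight space to show $w$ survives into the simple subquotient $\sF(V_\bfs)$; your ``descent step'' via the $\afHrC$-composition series and the defining property of $V_\bfs$ is exactly the paper's sandwich $1=\dim L_{\mu'}\le\dim M_{\mu'}\le\dim W_{\mu'}=1$, phrased contrapositively. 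The only cosmetic difference is at the very end: the paper concludes by observing that $L$ is therefore the irreducible quotient of the Verma-type module $M(\bfQ_\bfs)$, whereas you appeal to Lemma~\ref{classification 1} for irreducibility and then to Theorem~\ref{classification of simple afUglC-modules}.
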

\begin{proof}
Let $W=\sF(\bar \sI_\mu)$.
By \ref{standard module} we have
$W\cong L(\bfQ_1)\ot\cdots\ot L(\bfQ_p)$, where $\bfQ_i=(Q_{i,1}(u),\cdots,Q_{i,n}(u))$ with $Q_{i,n}(u)=(1-a_i\ttv^{-n+1}u)^{\dt_{\mu_i,n}}$ and
$$P_{i,j}(u):=\frac{Q_{i,j} (u\ttv^{j-1})} {Q_{i,j+1} (u\ttv^{j+1})}=(1-a_iu)^{\dt_{j,\mu_i}}$$ for $1\leq i\leq p$ and $1\leq j\leq n-1$.
We will identify $W$ with $L(\bfQ_1)\ot\cdots\ot L(\bfQ_p)$. Let $w=w_1\ot\cdots\ot w_p\in W$, where $w_i$ is the pseudo-highest weight vector in $L(\bfQ_i)$. Then by \cite[6.3]{CP96} and \cite[4.1]{FM} we conclude that $w$ is the pseudo-highest weight vector in $W$ such that $\ttk_iw=\ttv^{\la_i}w$ and $\ms Q_i^\pm(u)w=Q_i^\pm(u)w$ for $1\leq i\leq n$, where $\la_i=\deg Q_i^+(u)$,
$$Q_n^\pm(u)=\prod_{1\leq i\leq p}Q_{i,n}^\pm(u)=\prod_{1\leq i\leq p}
(1-(a_iu)^{\pm 1}\ttv^{\pm(-n+1)})^{\dt_{\mu_i,n}}=\prod_{1\leq i\leq p\atop\mu_i=n}
(1-(a_iu)^{\pm 1}\ttv^{\pm(-n+1)})$$
and $$P_j^\pm(u):=\frac{Q_j^\pm(\ttv^{j-1}u)}{Q_{j+1}^\pm(\ttv^{j+1}u)}
=\prod_{1\leq i\leq p}P_{i,j}^\pm(u)=
\prod_{1\leq i\leq p}(1-(a_iu)^{\pm 1})^{\dt_{j,\mu_i}}
=\prod_{1\leq i\leq p\atop\mu_i=j}(1-(a_iu)^{\pm 1})$$
for $1\leq j\leq n-1$.
By definition we have  $\bfQ_\bfs=(Q_1^+(u),\cdots,Q_n^+(u))$. Since
$\la_j=\deg Q_j^+(u)=\la_n+\sum_{j\leq s\leq n-1}\deg P_s^+(u)=|\{1\leq i\leq p\mid\mu_i\geq j\}|$ for $1\leq j\leq n$, we have $\la=(\la_1,\cdots,\la_n)=\mu'$.

Let $L=\sF(V_\bfs)$.  Since $V_\bfs$ is a semisimple $\HrC$-module, by \ref{Lem1} and \ref{finite case} we have $[L:L(\la)]=[L:\OgnC^{\ot r}\ot_{\HrC} E_\mu]=[\OgnC^{\ot r}\ot_{\HrC}V_\bfs:\OgnC^{\ot r}\ot_{\HrC} E_\mu]=[V_\bfs:E_\mu]=1$.
Thus
\begin{equation}\label{dimension}
\dim L_\la=1.
\end{equation}
Since $V_\bfs$ is the irreducible subquotient of $\bar\sI_\mu$,  there is a surjective $\afUglC$-module homomorphism $f:M\ra L$, where $M$ is a certain submodule of $W$. Since  $1=\dim L_\la\leq\dim M_\la\leq\dim W_\la=1$, we conclude that $\dim M_\la=\dim W_\la=1$. Hence
$M_\la=W_\la=\spann\{w\}$ and $L_\la=\spann\{f(w)\}$. By \eqref{dimension} we have $f(w)\not=0$. Since $f$ is a $\afUglC$-module homomorphism, $f(w)$ is the pseudo-highest weight vector in $L$ such that $\ttk_if(w)=f(\ttk_iw)=\ttv^{\la_i}f(w)$ and $\sQ_i^\pm(u)f(w)=f(\sQ_i^\pm(u)w)=Q_i^\pm(u)f(w)$ for $1\leq i\leq n$. This implies that $L$ is the irreducible quotient module of $M(\bfQ_\bfs)$ and hence $L\cong L(\bfQ_\bfs)$.
\end{proof}

Combining  \ref{classification 1}, \ref{bijective map} with \ref{main theorem} yields the following classification theorem of irreducible $\afSrC$-modules, which was proved in \cite[4.6.8]{DDF} using a different approach.
\begin{Coro}\label{classification 2}
The set $\{L(\bfQ)\mid\bfQ\in\sQ(n)_r\}$ is a complete set of nonisomorphic finite dimensional irreducible $\afSrC$-modules.
\end{Coro}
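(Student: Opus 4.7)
The plan is to assemble the three earlier results \ref{classification 1}, \ref{bijective map}, and \ref{main theorem} into a single statement. First I would recall from \ref{classification 1} that the Schur functor $\sF$ provides a complete, irredundant list of the finite dimensional irreducible $\afSrC$-modules, namely $\{\sF(V_\bfs)\mid\bfs\in\ms S_r^{(n)}\}$. Thus the classification is already known, and what remains is to re-parametrize this list in terms of the dominant $n$-tuples of polynomials in $\sQ(n)_r$.

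Second, I would invoke \ref{main theorem}, which identifies each $\sF(V_\bfs)$ with the irreducible pseudo-highest weight $\afUglC$-module $L(\bfQ_\bfs)$, where $\bfQ_\bfs = \pa_{n,r}(\bfs)\in\sQ(n)_r$. Because $\zrC:\afUglC\to\afSrC$ is surjective (by \cite[3.8.1]{DDF}, as recalled before \eqref{afzrC}), the $\afUglC$-module structure on $\sF(V_\bfs)$ factors through $\afSrC$, so $L(\bfQ_\bfs)$ may equivalently be regarded as an irreducible $\afSrC$-module.

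Third, I would apply \ref{bijective map}, which asserts that $\pa_{n,r}:\ms S_r^{(n)}\to\sQ(n)_r$ is a bijection. Substituting $\bfQ=\pa_{n,r}(\bfs)$ into the list from \ref{classification 1} then yields
\[
\{\sF(V_\bfs)\mid\bfs\in\ms S_r^{(n)}\}=\{L(\bfQ_\bfs)\mid\bfs\in\ms S_r^{(n)}\}=\{L(\bfQ)\mid\bfQ\in\sQ(n)_r\},
\]
as a set of nonisomorphic finite dimensional irreducible $\afSrC$-modules, which is exactly the assertion of the corollary.

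I do not anticipate any serious obstacle: the corollary is a direct bookkeeping consequence of the three quoted results, and all the genuine content (the identification of Drinfeld polynomials for $n\leq r$ in \ref{main theorem}, the surjectivity of $\pa_{n,r}$ in \ref{bijective map}, and the Schur-functor classification in \ref{classification 1}) has already been established. The only point to be careful about is to note explicitly that the $\afSrC$-action and the $\afUglC$-action on $L(\bfQ)$ determine each other via $\zrC$, so that irreducibility and nonisomorphism over $\afUglC$ transfer to the same over $\afSrC$.
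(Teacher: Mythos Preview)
Your proposal is correct and follows exactly the paper's approach: the corollary is stated immediately after \ref{main theorem} with the one-line justification ``Combining \ref{classification 1}, \ref{bijective map} with \ref{main theorem} yields the following classification theorem,'' and your three steps unpack precisely this combination. Your added remark about transferring irreducibility and nonisomorphism along the surjection $\zrC$ is a helpful clarification but does not change the argument.
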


Finally we will use \ref{main theorem} to generalize \cite[(6.5f)]{Gr80} to the affine case in \ref{prop of functor sG}.
Assume $N\geq n$. Let $e=\sum_{\la\in\Lanr}\bffkk_{\la}\in\sS_\vtg(N,r)_\mbc$. Then $e\sS_\vtg(N,r)_\mbc e\cong\afSrC$.
Consequently, the categories $e\sS_\vtg(N,r)_\mbc e\hmod$ and $\afSrC\hmod$ may be identified. With this identification, we
define a functor
\begin{equation}\label{functor sG}
\sG=\sG_{N,n,r}:\sS_\vtg(N,r)_\mbc\hmod\lra\afSrC\hmod,\qquad
V\longmapsto eV .
\end{equation}
 Then by definition we have
$\sG_{N,n,r}\circ\sF_{N,r}=\sF_{n,r}$.
 For $\bfQ=(Q_1(u),\cdots,Q_n(u))\in\sQ(n)_r$ let $\ti\bfQ=
(Q_1(u),\cdots,Q_n(u),1,\cdots,1)\in\sQ(N)_r$. Let $\ti\sQ(n)_r=\{\ti\bfQ\mid\bfQ\in\sQ(n)_r\}\han\sQ(N)_r$.
Clearly, by definition, we have
\begin{equation}\label{panr paNr}
\pa_{N,r}(\bfs)=\ti{\pa_{n,r}(\bfs)}.
\end{equation}
for $\bfs\in\mathscr S_r^{(n)}$.

\begin{Thm}\label{prop of functor sG}
Assume $N\geq n$.  Then $\sG(L(\ti\bfQ))\cong L(\bfQ)$ for $\bfQ\in\sQ(n)_r$. In particular we have
$\dim L(\ti\bfQ)_\al=\dim L(\bfQ)_\al$ for $\al\in\Lanr$.
Furthermore, for $\bfQ'\in\sQ(N)_r$,  $\sG(L(\bfQ'))\not=0$ if and only if  $\bfQ'\in\ti\sQ(n)_r$.
\end{Thm}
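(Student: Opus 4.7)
The plan is to exploit the composition identity $\sG \circ \sF_{N,r} = \sF_{n,r}$, noted immediately before the theorem, in order to transport the parametrization of irreducible $\afSrC$-modules (Corollary \ref{classification 2}, via Theorem \ref{main theorem}) between the $N$- and $n$-settings.

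First I would prove the main isomorphism $\sG(L(\ti\bfQ)) \cong L(\bfQ)$. Given $\bfQ \in \sQ(n)_r$, Lemma \ref{bijective map} supplies a unique $\bfs \in \ms S_r^{(n)}$ with $\pa_{n,r}(\bfs) = \bfQ$; since $\ms S_r^{(n)} \subseteq \ms S_r^{(N)}$, equation \eqref{panr paNr} then gives $\pa_{N,r}(\bfs) = \ti\bfQ$. Applying Theorem \ref{main theorem} with parameters $n$ and $N$ yields $\sF_{n,r}(V_\bfs) \cong L(\bfQ)$ and $\sF_{N,r}(V_\bfs) \cong L(\ti\bfQ)$, whence
\[
\sG(L(\ti\bfQ)) \cong \sG(\sF_{N,r}(V_\bfs)) = \sF_{n,r}(V_\bfs) \cong L(\bfQ).
\]
The dimension statement follows by observing that, by definition, $\sG(L(\ti\bfQ)) = eL(\ti\bfQ) = \bigoplus_{\al \in \Lanr} L(\ti\bfQ)_\al$, since the $\bffkk_\al$ are the weight idempotents of $\sS_\vtg(N,r)_\mbc$; the isomorphism with $L(\bfQ)$ respects the $\afSrC$-weight grading, which is itself induced by the same $\bffkk_\al$ inside $e\sS_\vtg(N,r)_\mbc e \cong \afSrC$.

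For the final claim, let $\bfQ' \in \sQ(N)_r$ and use Lemma \ref{bijective map} at level $N$ to write $\bfQ' = \pa_{N,r}(\bfs')$ for a unique $\bfs' \in \ms S_r^{(N)}$; Theorem \ref{main theorem} gives $L(\bfQ') \cong \sF_{N,r}(V_{\bfs'})$, so $\sG(L(\bfQ')) \cong \sF_{n,r}(V_{\bfs'})$. By Lemma \ref{classification 1}, this is nonzero if and only if $\bfs' \in \ms S_r^{(n)}$. It therefore remains to verify the equivalence $\bfs' \in \ms S_r^{(n)} \Leftrightarrow \bfQ' \in \ti\sQ(n)_r$: one direction is immediate from \eqref{panr paNr}, and for the converse I would inspect the explicit definition of $\pa_{N,r}$, noting that $Q_j' = 1$ for all $j > n$ forces $Q_N' = 1$ (so no segment has length $N$) and, by peeling off the factors $P_{N-1}(u), P_{N-2}(u), \ldots, P_n(u)$ inductively, that no segment of $\bfs'$ has length exceeding $n$. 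I expect no serious obstacle; the only mildly technical step is this last polynomial bookkeeping, and everything else is formal once the intertwining identity $\sG \circ \sF_{N,r} = \sF_{n,r}$ is invoked.
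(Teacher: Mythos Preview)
Your proof of the isomorphism $\sG(L(\ti\bfQ))\cong L(\bfQ)$ is essentially the paper's: both pass through $\bfs\in\ms S_r^{(n)}$ via Lemma~\ref{bijective map}, invoke Theorem~\ref{main theorem} at levels $n$ and $N$, and use the identity $\sG\circ\sF_{N,r}=\sF_{n,r}$ (the paper phrases this last step through \cite[4.3.3]{DDF}, i.e.\ $e\,\sT_\vtg(N,r)\cong\sT_\vtg(n,r)$, which is the content of that identity).

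For the final equivalence, however, you take a genuinely different route. The paper argues abstractly: by \cite[6.2(g)]{Gr80} the nonzero images $\sG(L(\bfQ'))$, as $\bfQ'$ ranges over $\sQ(N)_r$, form a complete set of non-isomorphic irreducible $\afSrC$-modules; comparing with Corollary~\ref{classification 2} then forces $\{\bfQ'\in\sQ(N)_r:\sG(L(\bfQ'))\neq0\}=\ti\sQ(n)_r$. You instead compute $\sG(L(\bfQ'))\cong\sF_{n,r}(V_{\bfs'})$ directly and apply Lemma~\ref{classification 1}, reducing to the combinatorial statement $\bfs'\in\ms S_r^{(n)}\Leftrightarrow\bfQ'\in\ti\sQ(n)_r$. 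Your approach is more explicit and avoids the external reference to Green; the paper's is shorter and sidesteps any bookkeeping. Incidentally, your ``peeling'' step can be replaced by a one-line argument: if $\bfQ'=\ti\bfQ$ with $\bfQ\in\sQ(n)_r$, write $\bfQ=\pa_{n,r}(\bfs)$ for some $\bfs\in\ms S_r^{(n)}$, so $\pa_{N,r}(\bfs)=\ti\bfQ=\bfQ'$ by \eqref{panr paNr}, and the injectivity of $\pa_{N,r}$ gives $\bfs'=\bfs\in\ms S_r^{(n)}$.
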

\begin{proof}
If $\bfQ\in\sQ(n)_r$ then by \ref{bijective map} we conclude that there exist $\bfs\in\ms S_r^{(n)}$ such that $\bfQ=\pa_{n,r}(\bfs)$.
By \ref{main theorem} and \eqref{panr paNr} we have $L(\ti\bfQ)\cong\sT_\vtg(N,r)\ot_{\afHrC}V_\bfs$. So by \cite[4.3.3]{DDF} and \ref{main theorem} we have $\sG (L(\ti\bfQ))\cong (e\sT_\vtg(N,r))\ot_{\afHrC}V_\bfs\cong \sT_\vtg(n,r)\ot_{\afHrC}V_\bfs\cong L(\bfQ)$.
By \cite[6.2(g)]{Gr80}, the set $\{\sG (L(\bfQ'))\not=0\mid\bfQ'\in\sQ(N)_r\}$ forms a complete set of non-isomorphic irreducible $\afSrC$-modules.
This together with \ref{classification 2} implies that
$\{\sG (L(\bfQ'))\not=0\mid\bfQ'\in\sQ(N)_r\}=\{
\sG(L(\ti\bfQ)) \mid\bfQ\in\sQ(n)_r\}$. Consequently,
$\sG(L(\bfQ'))\not=0$ if and only if  $\bfQ'\in\ti\sQ(n)_r$.
\end{proof}

\end{document}